\theoremstyle{plain}                                       %
\newtheorem{thm}{\quad Theorem}                            %
\newtheorem{cor}[thm]{\quad Corollary}                     %
\newtheorem{prop}[thm]{\quad Proposition}                  %
\theoremstyle{definition}                                  %
\newtheorem{defi}[thm]{\quad Definition}                   %
\newtheorem{rmk}[thm]{\quad Remark}                        %
\newtheorem{ejem}[thm]{\quad Example}                      %
\newcommand{\R}{{\Bbb R}}
\newcommand{\N}{{\Bbb N}}
\newcommand{\K}{{\Bbb K}}
\newcommand{\C}{{\Bbb C}}
\newcommand{\al}{{\alpha}}
\newcommand{\Ri}{{\mathcal{R}}}
\begin{document}

\vspace{1cm}

\title{Characteristic Curves and the exponentiation in the Riordan Lie group: A connection through examples.}

\author{ Pedro J. Chocano$^\flat$,  Ana Luz\'{o}n*, Manuel A. Mor\'{o}n$^\natural$, L. Felipe Prieto Mart\'{i}nez$^\dag$ }
\address{$^\flat$ Departamento de Matem\'{a}tica Aplicada, Ciencia e Ingenier\'{i}a de los Materiales y Tecnolog\'{i}a Electr\'{o}nica, ESCET Universidad Rey Juan Carlos, 28933 M\'{o}stoles (Madrid), Spain}
\email{pedro.chocano@urjc.es}
\address{*Departamento de Matem\'{a}tica Aplicada. Universidad Polit\'{e}cnica de Madrid (Spain).}
\email{anamaria.luzon@upm.es}

\address{ $\natural$ Departamento de Algebra, Geometr\'{i}a y Topolog\'{i}a.
Universidad Complutense de Madrid  and Instituto de Matem\'{a}tica
Interdisciplinar (IMI)(Spain).} \email{mamoron@mat.ucm.es}

\address{ $^\dag$  Departamento de Matem\'{a}tica Aplicada, ETS Arquitectura, Universidad Polit\'{e}cnica de
Madrid (Madrid). }
\email{luisfelipe.prieto@upm.es}

\maketitle


\begin{abstract}
We point out how to use the classical characteristic method, that is used to solve quasilinear PDE's, to obtain the matrix exponential of some lower triangle infinite matrices. We use the Lie Frechet structure of the Riordan group described in \cite{Lie}. After that we describe some linear dynamical systems in $\K[[x]]$ with a concrete involution being a symmetry or a time-reversal symmetry for them. We take this opportunity to assign some dynamical properties to the Pascal Triangle. 
\end{abstract}

\maketitle

\section{Introduction}

Several phenomena may be modelled using systems of differential equations and a key notion to get their solutions is the exponential of (finite) square matrices. Therefore, the problem of computing these exponentials is relevant. Moreover, some crucial developments of Linear Algebra have been motivated by the study of the matrix exponential. We recommend \cite{mol-van} for a survey on this topic.

Furthermore, the matrix exponential lies in the core of Lie Theory and connects the natural Lie algebra of all $n\times n $ matrices with the general linear group $GL(n, \K)$. We think that \cite{Rossmann} is a very good introductory text for this point of view.

By allowing concepts as manifolds modelled in infinite dimensional spaces (e.g., Banach spaces, Frechet spaces etc.), the theory of Lie groups has been extended to the infinite dimensional framework \cite{Milnor}. See \cite{H-N} for some coherence relationships between infinite dimensional Lie group structures and pro-Lie group structures when both are shared by a group. This is the case of the Riordan group. The previous facts motivated us  to introduce and develop in \cite{Lie} a structure of infinite dimensional Lie group on the Riordan group. 

In the following section we recall some basic facts about the Riordan group and its Lie group and pro-Lie group structures described in \cite{Lie}. Notice that although this group appeared under this name more or less recently, the group structure and many of its elements are latent in many developments of classical mathematics (special sequences of numbers and polynomials, Umbral Calculus and much more). It is clear that,  historically,  the first (and surely the best) known Riordan matrix is Pascal's Triangle. Besides this, now, there is a lot of historical names of mathematicians related to some Riordan matrices.

The aim of this note is to describe the matrix exponential, for some infinite lower triangular matrices, from the solution of certain partial differential equations, which we find using the method of characteristics (see \cite{john}). We do all of this within the framework of the Riordan Lie group and the corresponding Lie algebra. Then, motivated by general symmetry properties of dynamical systems (\cite{Lam-Rob} and \cite{ofarrel2}), we describe some of those systems in $\K[[x]]$ that come from the Riordan group with a very special Riordan involution as a symmetry or time-reversal symmetry. We consider a sequence of linear ordinary differential equations in Euclidean spaces as problems approaching certain partial differential equations, using for that the pro-structures of both: the Riordan group and the corresponding Lie algebra. We propose a geometric analysis of these problems and we enumerate symmetric properties of a special sequence related to the Pascal Triangle.

Apart from the general description of the Riordan group  contained in Section $2$, we also need, along the paper, some specific results from \cite{Lie}. This is the reason why in Section $3$ and $4$ we recall some particularly related results from \cite{Lie} to make this note as self-contained as possible.

Since our first approach to the Riordan group, in  \cite{teo}, is different from the usual one in the literature, we have also different notation. This is the reason why we recommend  our previous works \cite{teo}, \cite{2ways}, \cite{finitas} and, of course, \cite{Lie} (ordered chronologically) for information about basic results and notation used herein. We still maintain the notation $\K$ for a field although in this paper we are only considering $\K$ as the real numbers. This is because many of the results and ideas can be translated, at least, to the case $\K=\C$, the field of complex numbers.

\section{Basic facts on the differentiable structure of the Riordan group} The results of this section can be found in \cite{teo},  \cite{2ways}, \cite{finitas} and  \cite{Lie}.
\subsection{Riordan matrices and the Riordan group}
The definition of {\it Riordan matrix} and the related concept  of {\it Riordan group} appeared in the foundational paper \cite{Sha91} due to Shapiro, Getu, Woan, and Woodson. The original definition of a Riordan matrix given in \cite{Sha91} is more restrictive than that used currently in the literature, which is precisely the one that we are going to use herein.

\textit{A Riordan matrix is an infinite matrix $D=(d_{i,j})_{i,j\in\N}$   whose columns are the coefficients of successive terms of a geometric progression in $\K[[x]]$ where the initial term is a formal power series of order $0$ and the common ratio is a formal power series of order $1$ (so $D$ needs to be lower triangular and its diagonal needs to be a geometric progression in $\mathbb{K}$). }
 
We represent a Riordan matrix $D$ by  $T(f\mid
g)$, where $f(x)=\sum_{k=0}^{\infty}f_k x^k$ and $g(x)=\sum_{k=0}^{\infty}g_k x^k$  are formal power series in $\K[[x]]$ with $f(0)\neq0$ and $g(0)\neq0$, so that
$\displaystyle{d_{i,j}=[x^i]\frac{x^jf(x)}{g^{j+1}(x)}}$.
Consequently, the first term of the geometric progression is $\displaystyle{\frac{f(x)}{g(x)}}$
and the common ratio is  $\displaystyle{\frac{x}{g(x)}}$. In this terms, Pascal's triangle is $T(1\mid 1-x)$.

The above definition can be reinterpreted saying that the generating function of the $j$-th column (starting at $j=0$) of $D$ is the formal power series $\frac{x^jf(x)}{g^{j+1}(x)}$, which makes sense because $g(0)\neq0$. Hence, $D$ is a lower triangular matrix and it is invertible because $f(0)\neq0$.

In \cite{Sha91} it was stated one of the main results about Riordan matrices. Currently many authors call it the
{\it Fundamental Theorem for Riordan matrices (FTRM)}. \textit{Let $D=T(f\mid g)$ be a Riordan matrix and let $\gamma(x)=\sum_{k=0}^{\infty}
\gamma_k x^k$ be a power series in $\K[[x]]$. Consider the column vector ${\bf c}=(\gamma_0, \gamma_1, \gamma_2, \cdots)^T$. Then, the generating function of the matrix product $D{\bf c}$ is $\frac{f(x)}{g(x)}\gamma(\frac{x}{g(x)})$.  }
This fact is represented by $T(f\mid g)(\gamma)=\frac{f(x)}{g(x)}\gamma(\frac{x}{g(x)})$. A proof of this result, using a special ultrametric space $(\K[[x]], d)$ can be found  in \cite[Proposition 19]{teo}.

The Riordan group (i.e., the set of all Riordan matrices with the usual product of matrices), denoted by $\Ri(\K)$  or shortly $\Ri$, is a subgroup of the group of invertible infinite lower triangular matrices with the usual product of matrices as the
operation. The product is given by 
\[T(f\mid g)T(l\mid m)=T\left(fl\left(\frac{x}{g}\right)\big|
gm\left(\frac{x}{g}\right)\right),\] where
$\displaystyle{fl\left(\frac{x}{g}\right)\equiv f(x)\cdot
l\left(\frac{x}{g(x)}\right)}$ and analogously for the second term, and the inverse is given by 
\[(T(f\mid g))^{-1}\equiv T^{-1}(f\mid g)=T\left(\frac{1}{f(\frac{x}{A})}\Big| A \right),\]
where
$\displaystyle{\left(\frac{x}{A}\right)\circ\left(\frac{x}{g}\right)=\left(\frac{x}{g}\right)\circ\left(\frac{x}{A}\right)=x}$. See \cite[Proposition 20]{teo} for more details.

The sequence of the coefficients of the previous formal power series, denoted by $A$, is the so-called 
$A$-sequence of $T(f\mid g)$. Obviously, the A-sequence of $T(f\mid g)$ depends only on the power series $g$. Moreover, if $A=\sum_{k\geq0}a_{k}x^{k}$, then
\[d_{i,j}=\sum_{k=0}^{i-j}a_kd_{i-1,j-1+k} \qquad i,j\geq1.\]

\subsection{The Lie and pro-Lie group structures on the Riordan group}
Suppose that $\K$ is the field of real or complex numbers, denoted
by $\R$ and $\C$ respectively. Let us consider a natural way to give a completely metrizable topology  in $\K[[x]]$, by means of the identification $\K^{\N}\equiv\K[[x]]$ obtained by passing from sequences to ordinary generating functions and vice versa.

The topology considered  in $\K^{\N}$ is always the product topology for the usual topology in $\K$. Therefore, we convert $\K[[x]]$ into a Frechet space, that is, a completely metrizable locally convex linear topological space.

This is the starting point to describe a natural Frechet Lie group structure on the Riordan group. Beside this, the Riordan group can be described as the inverse limit of an inverse sequence of groups of finite matrices obtaining a pro-Lie group structure on the Riordan group.

It is well known that any Riordan matrix is completely determined by its first column and
its $A$-sequence. In this way, any Riordan matrix $D=(d_{i,j})_{i,j\in\N}$ is defined
by a sequence $\textbf{u}=(u_k)_{k\in\N}$ with $u_0\neq0$,
$u_1\neq0$, and $u_{2k}=x_k$, $u_{2k+1}=a_k$, being $x_k=d_{k,0}$, $A(x)=\sum_{n\geq0}a_nx^n$ and $d_{i,j}=\sum_{k=0}^{i-j}a_kd_{i-1,j-1+k}$ for $j\geq1$. We denote the matrix $D$ described above by
$\varphi_{\infty}(\textbf{u})$.

Let us consider $\K$ with the usual Euclidean topology, the product topology in $\K^{\N}$ and the basic open set
\[
\mathcal{U}_{\infty}=\left\{\textbf{u}=(u_k)_{k\in\N}\in\K^{\N} \ |
\ \ u_0\neq0, \ u_1\neq0\right\}
\]
in $\K^{\N}$. Set
\[
\begin{matrix}
\varphi_{\infty}& : & \mathcal{U}_{\infty} &  \longrightarrow & \Ri(\K)\\
&& \textbf{u}& \longmapsto & \varphi_{\infty}(\textbf{u}).
\end{matrix}
\]
Obviously, $\varphi_{\infty}$ is a bijective function. So, we consider the unique topology on $\Ri(\K)$, that makes
 $\varphi_{\infty}$ a homeomorphism. Note that the topological space $\Ri(\K)$, the locally convex vector space $\K^{\N}$ and the map $\varphi_{\infty} : \mathcal{U}_{\infty}\rightarrow  \Ri(\K)$ fit all conditions to get:
 \begin{thm}
$(\Ri(\K), (\mathcal{U}_{\infty}, \varphi_{\infty}) )$ is a smooth
manifold modelled on the locally convex vector space $\K^{\N}$. Moreover, $\Ri(\K)$ with this smooth structure is a Lie group.
\end{thm}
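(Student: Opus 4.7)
The plan is to split the claim in two. First I establish that $\Ri(\K)$ is a smooth manifold because it admits the single global chart $(\mathcal{U}_\infty,\varphi_\infty)$, and then I verify that the group operations, read in $\mathbf{u}$-coordinates, are smooth maps between open subsets of $\K^\N$.

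For the manifold part, I would first check that $\mathcal{U}_\infty$ is an open subset of the Frechet space $\K^\N$: it is the preimage of $(\K\setminus\{0\})\times(\K\setminus\{0\})$ under the continuous projection $\mathbf{u}\mapsto(u_0,u_1)$. Since $\varphi_\infty$ is a bijection declared to be a homeomorphism between $\mathcal{U}_\infty$ and $\Ri(\K)$, the pair $(\Ri(\K),\varphi_\infty^{-1})$ is a global chart modelled on an open subset of $\K^\N$, and an atlas consisting of a single chart is vacuously smoothly compatible with itself. Hence $\Ri(\K)$ automatically carries the structure of a smooth manifold modelled on $\K^\N$; no transition maps need to be examined.

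For the Lie group part, I would rewrite the explicit formulas
\[
T(f\mid g)T(l\mid m)=T\!\left(fl(x/g)\mid gm(x/g)\right),\qquad T(f\mid g)^{-1}=T\!\left(1/f(x/A)\mid A\right),
\]
recalled in the excerpt, as maps in $\mathbf{u}$-coordinates. The key fact I rely on is the following characterization of smoothness in the product topology of $\K^\N$: a map $F\colon V\subseteq \K^\N\to \K^\N$ (and likewise from $V\subseteq \K^\N\times \K^\N$) is smooth in the Bastiani sense if and only if each component $\pi_k\circ F$ is smooth, and it is enough to see that every $\pi_k\circ F$ depends on only finitely many input coordinates through a rational function whose denominator never vanishes on $V$. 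Thanks to the recursion $d_{i,j}=\sum_{k=0}^{i-j}a_k d_{i-1,j-1+k}$ together with the identification $\mathbf{u}\leftrightarrow(\text{first column},A\text{-sequence})$, every entry $d_{i,j}$ is a polynomial in a finite initial segment of the first column and of the $A$-sequence. The only divisions required when passing through the product/inverse formulas are by $u_0$ (the constant term of $f$) and $u_1$ (the linear coefficient of $g$, which controls $A$), neither of which vanishes on $\mathcal{U}_\infty$. Therefore every $\pi_k\circ F$ is smooth on the relevant open set, and so is $F$.

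The main obstacle I expect is the bookkeeping required to confirm the claim of \emph{finite dependence}: that the $k$-th $\mathbf{u}$-coordinate of a product or inverse really depends on only finitely many coordinates of the inputs. For the product this reduces to controlling the order of the substitutions $l(x/g)$ and $m(x/g)$, which only requires the first $k+1$ coefficients of $f,g,l,m$; for the inverse it reduces to the fact that the compositional inverse of $x/g$ has each coefficient expressible, via Lagrange inversion or a direct recursion, as a rational function of finitely many coefficients of $g$ with denominator a power of $u_1$. Once this finite-dependence is in place, smoothness of both multiplication and inversion in the single global chart follows from the characterization above, and $\Ri(\K)$ is a Frechet Lie group modelled on $\K^\N$.
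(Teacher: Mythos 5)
The paper itself gives no proof of this theorem --- it is recalled from the cited work on Lie structures of Riordan groups, and the text preceding it only constructs the chart $(\mathcal{U}_{\infty},\varphi_{\infty})$ --- so your argument can only be compared with that setup, with which it agrees exactly: a single global chart modelled on the open set $\mathcal{U}_{\infty}\subseteq\K^{\N}$ gives the smooth structure for free, and smoothness of the group operations reduces componentwise to each $\mathbf{u}$-coordinate of a product or inverse being a rational function of finitely many input coordinates whose denominator is a monomial in $u_0,u_1$ and hence never vanishes on $\mathcal{U}_{\infty}$. Your proposal is correct and is essentially the intended proof; the only points to spell out fully in a write-up are the passage between the $\mathbf{u}$-coordinates (first column and $A$-sequence) and the pairs $(f,g)$, and the fact that smoothness of inversion must be verified separately since no implicit function theorem is available in the Frechet setting --- both of which you already flag.
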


One of the main tools that we have used to get results on the Riordan group is to consider it as the inverse limit of an inverse sequence of groups of finite matrices. A natural way to do this is as follows. For every $n\in\N$, consider the general linear group $GL(n+1,\K)$
formed by all $(n+1)\times(n+1)$ invertible matrices with coefficients in $\K$. Since every Riordan matrix is lower triangular, we can define a natural homomorphism $\Pi_n: \mathcal{R}\rightarrow GL(n+1,\K)$ given by
\[
\Pi_n((d_{i,j})_{i,j\in\N})=(d_{i,j})_{i,j=0,\cdots,n}.
\]
For obvious reasons, we will refer to this homomorphism
as the projection of the corresponding Riordan matrix.

To describe the Riordan group as an inverse limit of an inverse
sequence of groups of finite matrices we use the results of \cite{finitas}. We first consider the
subgroup of $GL(n+1,\K)$ defined by
$\mathcal{R}_n=\Pi_n(\mathcal{R})$.

\begin{defi}
Let $D=(d_{i,j})_{i,j=0,\cdots,n+1}\in\Ri_{n+1}$. We define
$P_n:\Ri_{n+1}\rightarrow\Ri_n$ by
\[
 P_{n}((d_{i,j})_{i,j=0,1,\cdots,n+1})=(d_{i,j})_{i,j=0,\cdots,n}.
\]
\end{defi}
$P_n(D)$ is obtained from $D$ by deleting its last row and its
last column. $P_n$ is a group homomorphism for every $n$ because
the matrices are lower triangular. Moreover, the diagram below is
commutative
\begin{displaymath}
\xymatrix{
&\Ri \ar[d]_{\Pi_n} \ar[rd]^{\Pi_{n+1}}& \\
&\Ri_{n}  & \ar[l]^{P_n} \Ri_{n+1} .}
\end{displaymath}
From this, we get
\begin{thm} The  Riordan group $\Ri$ is isomorphic to
$\underleftarrow{\lim}\{(\mathcal{R}_n)_{n\in\N},(P_n)_{n\in\N}\}$. Consequently, $\Ri$ is a pro-Lie group.
\end{thm}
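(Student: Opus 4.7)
The plan is to produce an explicit isomorphism $\Pi:\Ri\to\underleftarrow{\lim}\{(\mathcal{R}_n)_{n\in\N},(P_n)_{n\in\N}\}$ by using the universal property of the inverse limit, and then to read off the pro-Lie group conclusion from the fact that each $\mathcal{R}_n$ is a (finite-dimensional) Lie subgroup of $GL(n+1,\K)$.

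The first step is to observe that the commutative diagram displayed just above the statement says exactly $P_n\circ\Pi_{n+1}=\Pi_n$ for every $n\in\N$, i.e.\ that the family $(\Pi_n)_{n\in\N}$ is a cone over the inverse system $\{(\mathcal{R}_n)_{n\in\N},(P_n)_{n\in\N}\}$. The universal property of the inverse limit (in the category of topological groups, whose inverse limit is carried by the corresponding subspace of the product equipped with pointwise operations) therefore furnishes a unique continuous group homomorphism $\Pi:\Ri\to\underleftarrow{\lim}\mathcal{R}_n$, given concretely by $D\mapsto(\Pi_n(D))_{n\in\N}$.

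Next I would verify bijectivity. Injectivity is immediate: if $\Pi(D)=\Pi(D')$ then $\Pi_n(D)=\Pi_n(D')$ for every $n$, and since each entry $d_{i,j}$ of a Riordan matrix is read off by $\Pi_n$ as soon as $n\geq\max(i,j)$, this forces $D=D'$. For surjectivity, given a thread $(D^{(n)})_{n\in\N}$ in the inverse limit I would glue its components into a single infinite matrix by setting $d_{i,j}:=(D^{(n)})_{i,j}$ for any $n\geq\max(i,j)$; the compatibility $P_n(D^{(n+1)})=D^{(n)}$ guarantees this is unambiguous. To lift the resulting lower triangular matrix to an actual element of $\Ri$, I would use the characterization recalled in Section~2: any Riordan matrix is completely determined by its first column $(d_{k,0})_{k\in\N}$ and its $A$-sequence $(a_k)_{k\in\N}$ via $d_{i,j}=\sum_{k=0}^{i-j}a_kd_{i-1,j-1+k}$ for $j\geq 1$. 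Since each $D^{(n)}\in\mathcal{R}_n=\Pi_n(\Ri)$ is the truncation of some genuine Riordan matrix, the recurrence holds truncation-wise and the data $\mathbf{u}\in\mathcal{U}_\infty$ extracted from the thread determines via $\varphi_\infty$ a Riordan matrix $\widetilde{D}=\varphi_\infty(\mathbf{u})$ whose truncations coincide with the $D^{(n)}$; hence $\Pi(\widetilde{D})=(D^{(n)})_{n\in\N}$.

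To upgrade $\Pi$ to a homeomorphism (and thus a topological group isomorphism) I would simply compare topologies: the topology on $\Ri$ is the one pulled back from $\K^{\N}$ by $\varphi_\infty^{-1}$, and the topology on the inverse limit is the subspace topology from $\prod_n\mathcal{R}_n\subset\prod_n GL(n+1,\K)$; in both models a net of matrices converges iff it converges entry by entry, and $\Pi$ preserves this. Finally, since \cite{finitas} (cited for the construction of $\mathcal{R}_n$) identifies each $\mathcal{R}_n$ as a closed Lie subgroup of the finite-dimensional Lie group $GL(n+1,\K)$, the inverse limit $\underleftarrow{\lim}\mathcal{R}_n$ is, by the very definition of pro-Lie group, a pro-Lie group; the isomorphism just constructed transports this structure to $\Ri$.

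The main obstacle I expect is the surjectivity step: the thread $(D^{(n)})_{n\in\N}$ a priori only produces a lower triangular matrix, and one must argue that it actually lies in $\Ri$. This is what forces appealing to the $A$-sequence characterization rather than arguing just with matrix entries; the rest of the proof is the formal packaging of the universal property together with a topology comparison.
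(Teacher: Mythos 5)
Your proposal is correct and matches the route the paper takes: the paper exhibits exactly the cone condition $P_n\circ\Pi_{n+1}=\Pi_n$ via the displayed commutative diagram and then delegates the remaining verifications (bijectivity, the topology comparison, and the Lie-subgroup status of each $\mathcal{R}_n$ inside $GL(n+1,\K)$) to \cite{finitas}. Your filling-in of those verifications --- in particular resolving the surjectivity of the induced map to the inverse limit through the $A$-sequence characterization and $\varphi_\infty$ --- is the standard argument and is sound.
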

See \cite{finitas} for more details and notation.
\subsection{The Lie Algebra of the Riordan group}

Again, using the results of \cite{Lie} we obtain a full and faithful representation of the Lie algebra
$\mathcal{L}(\Ri(\K))$ of the Lie group $\Ri(\K)$. We have that $\displaystyle{L=(\ell_{i,j})_{i,j\in\N}}\in \mathcal{L}(\Ri(\K))$
if and only if $L$ is lower triangular and there are two sequences
$(\chi_i)_{i\in\N}$ and $(\al_i)_{i\in\N}$ such that

{\small
\[
L=\left(
                                \begin{array}{ccccccc}
                                  \chi_0 &  &  &  &  &  &  \\
                                  \chi_1 & \chi_0+\al_0 &  &  &  &  &  \\
                                  \chi_2 & \chi_1+\al_1 & \chi_0+2\al_0 &  &  &  &  \\
                                   \vdots & \vdots & \vdots & \ddots &  &  \\
                                  \chi_{n-1} & \chi_{n-2}+\al_{n-2} & \chi_{n-3}+2\al_{n-3}&  \cdots & \chi_0+(n-1)\al_0  &  \\
                                  \chi_n & \chi_{n-1}+\al_{n-1} & \chi_{n-2}+2\al_{n-2} & \cdots & \chi_1+(n-1)\al_1 &  \chi_0+n\al_0\\
                                    \vdots & \vdots & \vdots & \cdots& \vdots&\vdots&\ddots  \\
                                \end{array}
                              \right)
\]} with the usual sum of matrices and the usual product by scalars in
$\K$. The Lie bracket is
\[
[L_1,L_2]=L_1L_2-L_2L_1
.\]
We denote the matrix $L$ above as $L(\chi,\alpha)$, where $\displaystyle{\chi(x)=\sum_{n\geq0}\chi_nx^n, \
\al(x)=\sum_{n\geq0}\al_nx^n}$, and 

\[
\mathcal{L}(\Ri(\K))=\left\{L(\chi,\al) \  |  \ \chi,
\al\in\K[[x]]\right\}.
\]

\begin{prop}
Any $L=L(\chi, \alpha)\in\mathcal{L}(\Ri(\K))$ induces a linear continuous map,
denoted again by $L:\K^{\N}\rightarrow\K^{\N}$, given by $L(h)=
\chi(x)h(x)+x\al(x)h'(x)$ where 
$h(x)=\sum_{n\geq0}h_nx^n$.
\end{prop}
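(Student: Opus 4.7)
The plan is to verify the formula $L(h) = \chi(x) h(x) + x \alpha(x) h'(x)$ by reading off the general entry of the matrix $L(\chi, \alpha)$ displayed above, and then to obtain continuity from the fact that $L$ is lower triangular.

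First I would identify the $(n, j)$-entry of $L$ for $n \geq j$. Inspecting the rows and columns of the displayed matrix, the entry on the $n$-th row and $j$-th column (both starting at $0$) is
\[
\ell_{n, j} = \chi_{n-j} + j\, \alpha_{n-j},
\]
with $\ell_{n, j} = 0$ for $j > n$. A quick sanity check on the diagonal ($\chi_0 + n \alpha_0$), the first subdiagonal ($\chi_1 + j \alpha_1$), and the first column ($\chi_n$) confirms this uniform description.

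Next I would compute the generating function of $L(h)$ where $h(x) = \sum_{n \geq 0} h_n x^n$. For each $n$,
\[
(Lh)_n = \sum_{j=0}^{n} \ell_{n,j}\, h_j = \sum_{j=0}^{n} \chi_{n-j} h_j + \sum_{j=0}^{n} \alpha_{n-j}\, (j h_j).
\]
The first sum is the Cauchy product, so its generating function is $\chi(x) h(x)$. For the second sum, note that the sequence $(j h_j)_{j \geq 0}$ has generating function $\sum_{j \geq 0} j h_j x^j = x h'(x)$, so the convolution of $(\alpha_k)$ with $(j h_j)$ yields $\alpha(x) \cdot x h'(x) = x \alpha(x) h'(x)$. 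Summing both contributions gives exactly $L(h) = \chi(x) h(x) + x \alpha(x) h'(x)$, which establishes the formula (and linearity is immediate from the formula itself).

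Finally, for continuity on $\K^\N$ with the product topology: since $L$ is lower triangular, the $n$-th coordinate $(Lh)_n$ depends only on the coordinates $h_0, \ldots, h_n$ of $h$, and this dependence is a (fixed) linear combination. Hence each coordinate projection $h \mapsto (Lh)_n$ is continuous for the product topology, and by the universal property of the product topology $L$ itself is continuous. The only mild obstacle is being careful with the identification between $L$ as a matrix acting on columns and $L$ as an operator on $\K[[x]]$; once the indexing is nailed down, the proof is just the two convolution identities above.
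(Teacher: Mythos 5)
Your proof is correct. The paper itself states this proposition without proof (it is recalled from the reference \cite{Lie}), and your argument --- reading off the general entry $\ell_{n,j}=\chi_{n-j}+j\,\al_{n-j}$, splitting the matrix--vector product into the two convolutions generating $\chi(x)h(x)$ and $x\al(x)h'(x)$, and deducing continuity from the fact that each output coordinate is a fixed finite linear combination of input coordinates in the product topology --- is exactly the standard direct verification one finds there.
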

The continuous linear map $L(\chi,\al)$ can be viewed as a
$C^{\infty}$ vector field in the Frechet space $\K^{\N}$ under the
canonical identification $T_{h}\K^{\N}=\K^{\N}$ in the tangent space
at any $h$. From this point of view we have the following
proposition.
\begin{prop}\label{prop:initialValue}
The initial value problem
\[
\left\{
  \begin{array}{ll}
    \gamma'(t)=L(\chi,\al)(\gamma(t)) \\
    \gamma(0)=h
  \end{array}
\right.
\]
in $\K^{\N}$ has a unique solution given by
\[
\gamma(t)=e^{tL(\chi,\al)}(h).
\]
Consequently, there is a one-parameter group of Riordan matrices
$T(f(x,t)\mid g(x,t))$ such that
\[
\gamma(t)=T(f(x,t)\mid g(x,t))(h) =\frac{f(x,t)}{g(x,t)}h\left(\frac{x}{g(x,t)}\right).
\] In particular, $\{T(f(x,t)\mid g(x,t))\}_{t\in\R} $ is an abelian subgroup of the Riordan group, because it defines a continuous dynamical system (or flow) in $\K[[x]]$.
\end{prop}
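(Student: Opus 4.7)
The plan is to exploit the pro-Lie description $\Ri(\K)\cong\varprojlim \Ri_n$ from Theorem 2 and reduce the ODE in the Frechet space $\K^{\N}$ to a coherent family of classical linear ODEs in $\K^{n+1}$, where the Picard--Lindel\"{o}f theorem applies.

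First I would project. Since $L(\chi,\al)$ is lower triangular, its truncation $L_n$ to the first $n+1$ rows and columns is a well defined element of $\mathfrak{gl}(n+1,\K)$, and lower-triangularity together with $P_n\circ\Pi_{n+1}=\Pi_n$ yields $P_n L_{n+1}=L_n P_n$ as linear maps. Thus the truncated initial value problem
\[
\gamma_n'(t)=L_n\,\gamma_n(t),\qquad \gamma_n(0)=\Pi_n(h)
\]
is a constant-coefficient linear system in $\K^{n+1}$ with unique solution $\gamma_n(t)=e^{tL_n}(\Pi_n(h))$; applying $P_n$ to the problem at level $n+1$ and invoking finite-dimensional uniqueness gives $P_n(\gamma_{n+1}(t))=\gamma_n(t)$.

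Next I would assemble. Entry $(i,j)$ of any power $L^k$ depends only on entries of $L$ in rows $0,\dots,i$, so the formal matrix series $\sum_{k\geq 0}(t^k/k!)\,L^k$ is entry-wise a polynomial in $t$ and defines an infinite matrix $e^{tL(\chi,\al)}$ whose $(n+1)\times(n+1)$ upper-left block is $e^{tL_n}$. Since $\K^{\N}$ is the topological inverse limit of the $\K^{n+1}$ along truncations, the coherent family $(\gamma_n(t))_n$ determines a unique $\gamma(t)\in\K^{\N}$, and these identifications give $\gamma(t)=e^{tL(\chi,\al)}(h)$. Uniqueness in $\K^{\N}$ follows because any solution $\widetilde\gamma$ projects under each $\Pi_n$ to a solution of the finite-dimensional system, which is unique.

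For the Riordan-group conclusion, classical matrix-Lie-group theory applied at each finite level places $e^{tL_n}$ in $\Ri_n$ for every $t\in\R$, since $L_n$ lies in the Lie algebra of the matrix Lie group $\Ri_n\leq GL(n+1,\K)$. Compatibility with the $P_n$ and Theorem 2 then place $e^{tL(\chi,\al)}$ in $\Ri(\K)$, so one may write $e^{tL(\chi,\al)}=T(f(x,t)\mid g(x,t))$ for some $f(\cdot,t),g(\cdot,t)\in\K[[x]]$, and the displayed formula for $\gamma(t)$ is the FTRM applied to $h$. The identity $e^{(s+t)L}=e^{sL}e^{tL}$ holds on each truncation and passes to the limit, yielding the abelian one-parameter subgroup. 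I expect the main obstacle to be confirming that the truncation of $\mathcal{L}(\Ri(\K))$ coincides with the Lie algebra of the finite-dimensional Lie group $\Ri_n$, so that the finite-dimensional exponential really lands in $\Ri_n$; for this I would invoke the explicit analyses of \cite{finitas} and \cite{Lie} rather than reprove them.
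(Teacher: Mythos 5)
The paper gives no proof of this proposition: it is quoted from \cite{Lie}, so there is no in-text argument to compare against line by line. Your overall strategy --- truncate to the finite-dimensional systems, solve there by classical linear ODE theory, and reassemble through the inverse limit --- is sound and consistent with the pro-Lie point of view the paper emphasizes. In particular, reducing uniqueness to finite-dimensional uniqueness via the projections is exactly the right way to sidestep the failure of Picard--Lindel\"{o}f in a general Frechet space, and deferring the identification of $\Pi_n(\mathcal{L}(\Ri(\K)))$ with the Lie algebra of $\Ri_n$ to \cite{finitas} and \cite{Lie} is legitimate, since the present paper defers the same point.

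One intermediate claim is false as stated: the series $\sum_{k\geq 0}(t^k/k!)L^k$ is \emph{not} entry-wise a polynomial in $t$ unless $L$ is strictly lower triangular. For $L(\chi,\al)$ the diagonal entries are $\chi_0+n\al_0$, so already for $L(1,1)$ (the matrix $D$ of Example 6(i)) the diagonal of $e^{tL}$ consists of the entire functions $e^{(n+1)t}$, not polynomials. What is true, and what your argument actually needs, is that entry $(i,j)$ of $L^k$ coincides with entry $(i,j)$ of $L_i^k$ by lower triangularity, so the entry-wise series equals the corresponding entry of the finite-dimensional exponential $e^{tL_i}$ and hence converges (to an entire function of $t$; a polynomial only when the diagonal vanishes). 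With that correction the assembly step, the coherence $P_n(e^{tL_{n+1}})=e^{tL_n}$, and the passage to the limit all go through, and the remainder of your argument (membership of $e^{tL_n}$ in $\Ri_n$, the FTRM giving the displayed formula, and the one-parameter group law) is fine.
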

Due to the special patterns followed by the matrices in the Lie Algebra and those in the Riordan group, we named the corresponding section in \cite{Lie} as {\it Arithmetic vector fields and geometric flows.}
\subsection{The partial differential equation induced by an element in the Lie Algebra.}
For any $t\in \R$ and  $L\in
\mathcal{L}(\Ri(\K))$ we
have a well-defined Riordan matrix $e^{tL}$. Note that for any $t\in \R$, $\gamma(t)\in \K[[x]]$, where $\gamma$ satisfies the conditions of Proposition \ref{prop:initialValue}. With this in mind, another way to interpret
the above proposition is as follows.
\begin{cor}
Let $\chi,\al\in\K[[x]]$. Then the unique solution of the initial value
problem
\[
\left\{
  \begin{array}{ll}
    \frac{\partial u}{\partial t}=\chi(x)u(x,t)+x\al(x)\frac{\partial u}{\partial x} \\
    u(x,0)=h(x)
  \end{array}
\right.
\]
in $\K[[x,t]]$ is given by
\[
u(x,t)=e^{tL(\chi,\al)}(h(x))=\frac{f(x,t)}{g(x,t)}h\left(\frac{x}{g(x,t)}\right).
\]
\end{cor}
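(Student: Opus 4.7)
The plan is to recognize this corollary as essentially a reformulation of the preceding Proposition \ref{prop:initialValue}, translating the abstract ODE in the Frechet space $\K^{\N}$ into the stated scalar PDE via the explicit description of $L(\chi,\al)$ as a vector field. So nothing genuinely new needs to be computed; the work lies in checking that the two formulations really mean the same thing coordinate-wise in the formal power series ring $\K[[x,t]]$.

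First I would set $\gamma(t)=e^{tL(\chi,\al)}(h)\in\K[[x]]$ as given by Proposition \ref{prop:initialValue} and define $u(x,t)$ to be the element of $\K[[x,t]]$ whose coefficient of $x^n$ (viewed as a formal power series in $t$) is the $n$-th coordinate of $\gamma(t)$. Since $\gamma:\R\to\K^{\N}$ is smooth and each coordinate $\gamma_n(t)$ is real-analytic (being the restriction of an exponential of a continuous linear operator applied to $h$), this map is well-defined into $\K[[x,t]]$, and termwise differentiation in $t$ coincides with the derivative of $\gamma$ in the Frechet space $\K^{\N}$. Therefore $\tfrac{\partial u}{\partial t}$ corresponds to $\gamma'(t)$.

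Next I would use the explicit formula for the action of $L(\chi,\al)$ on $\K[[x]]$ given in the proposition preceding Proposition \ref{prop:initialValue}, namely $L(\chi,\al)(\gamma(t))=\chi(x)\gamma(t)(x)+x\al(x)\gamma(t)'(x)$, where the prime denotes the formal derivative in $x$. Interpreting $\gamma(t)(x)$ as $u(x,t)$ and $\gamma(t)'(x)$ as $\tfrac{\partial u}{\partial x}$, the ODE $\gamma'(t)=L(\chi,\al)(\gamma(t))$ becomes exactly the PDE $\tfrac{\partial u}{\partial t}=\chi(x)u+x\al(x)\tfrac{\partial u}{\partial x}$. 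The initial condition $\gamma(0)=h$ becomes $u(x,0)=h(x)$, and the closed form $u(x,t)=\tfrac{f(x,t)}{g(x,t)}h\bigl(\tfrac{x}{g(x,t)}\bigr)$ follows from the Riordan-matrix representation of $e^{tL(\chi,\al)}$ in Proposition \ref{prop:initialValue} together with the FTRM.

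For uniqueness, suppose $v(x,t)\in\K[[x,t]]$ is any other solution of the PDE with the same initial datum. Reading $v$ as a curve $\widetilde\gamma:\R\to\K^{\N}$ via its $x$-coefficients (well-defined since $v\in\K[[x,t]]$), the PDE forces $\widetilde\gamma'(t)=L(\chi,\al)(\widetilde\gamma(t))$ and $\widetilde\gamma(0)=h$, so the uniqueness clause already granted in Proposition \ref{prop:initialValue} gives $\widetilde\gamma=\gamma$, hence $v=u$. The only mildly delicate point, and the one I would write out carefully, is that solutions in $\K[[x,t]]$ correspond bijectively to smooth curves in $\K^{\N}$ in a way that intertwines $\partial_t$ with the Frechet derivative and $\chi(x)\cdot+x\al(x)\partial_x$ with $L(\chi,\al)$; once this dictionary is recorded, the corollary is immediate.
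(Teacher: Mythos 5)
Your proposal is correct and follows exactly the route the paper intends: the corollary is stated there without proof, presented as a direct reinterpretation of Proposition \ref{prop:initialValue} via the identification of $\K[[x]]$ with $\K^{\N}$ and the formula $L(\chi,\al)(h)=\chi(x)h(x)+x\al(x)h'(x)$. Your write-up merely makes explicit (and correctly) the dictionary between curves in $\K^{\N}$ and elements of $\K[[x,t]]$ that the authors leave implicit.
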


\section{From the matrix exponential $e^{tL}$ to the solutions of the PDE $\frac{\partial u}{\partial t}=\chi(x)u(x,t)+x\al(x)\frac{\partial u}{\partial x}$ and back}
Once recalled some basic results from \cite{Lie}, we propose the following \textit{strategy}.

Given any element $L=L(\chi, \alpha)$ in the Lie Algebra of the Riordan group, we associate to it the partial differential equation $\frac{\partial u}{\partial t}=\chi(x)u(x,t)+x\al(x)\frac{\partial u}{\partial x}$. At this point, we run into the following {\it dichotomy}.
\begin{itemize}
\item[(a)] If we are able to compute the one-parameter group $e^{tL}$ and to recognize $e^{tL}=T(f(x,t)\mid g(x,t))$ as a Riordan matrix for any $t\in \R$, then this allows us to solve in $\K[[x,t]]$ the initial value problem \[
\left\{
  \begin{array}{ll}
    \frac{\partial u}{\partial t}=\chi(x)u(x,t)+x\al(x)\frac{\partial u}{\partial x} \\
    u(x,0)=h(x)
  \end{array}
\right.
\]
 for any $h\in \K[[x]]$  because the solution is given by \[
u(x,t)=e^{tL(\chi(x),\al(x))}(h(x))=\frac{f(x,t)}{g(x,t)}h\left(\frac{x}{g(x,t)}\right).
\]
\item[(b)] If, on the contrary, we are able to solve in $\K[[x,t]]$ the initial value problem \[
\left\{
  \begin{array}{ll}
    \frac{\partial u}{\partial t}=\chi(x)u(x,t)+x\al(x)\frac{\partial u}{\partial x} \\
    u(x,0)=h(x)
  \end{array}
\right.
\]
 for any $h\in \K[[x]]$, then we get the one-parameter subgroup   $e^{tL(\chi(x),\al(x))}=T(f(x,t)\mid g(x,t))$. We can compute both parameters $f(x,t)$ and $g(x,t)$, because $\frac{f(x,t)}{g(x,t)}$ is the unique solution for the initial condition $h(x)\equiv 1$ and $\frac{f(x,t)}{g(x,t)}\frac{x}{g(x,t)}$ is the unique solution for the initial condition $h(x)=x$. Evaluating now at $t=1$, we have the corresponding matrix exponential.
\end{itemize}

\subsection{From the matrix exponential to the solution of the PDE}
\begin{ejem}
\begin{itemize}
\item[(i)] Consider the matrix

\[
D=\left(
  \begin{array}{ccccc}
    1 & 0 & 0 & 0 & \cdots \\
    0 & 2 & 0 & 0 & \cdots \\
    0 & 0 & 3 & 0 & \cdots \\
    0 & 0 & 0 & 4 & \cdots \\
    \vdots  & \vdots  & \vdots  & \vdots & \ddots \\
  \end{array}
\right).
\]
Obviously, $D=L(1,1)$ is in $\mathcal{L}(\Ri(\K))$. The partial differential equation associated to the matrix $D$ is \[\frac{\partial u}{\partial t}=u(x,t)+x\frac{\partial u}{\partial x}\]
The solution of the corresponding initial value problem \[
\left\{
  \begin{array}{ll}
    \frac{\partial u}{\partial t}=u(x,t)+x\frac{\partial u}{\partial x} \\
    u(x,0)=h(x)
  \end{array}
\right.
\] is given by $u(x,t)=e^{tD}(h)$. It is clear, by definition,  that
\[
e^{tD}=\left(
  \begin{array}{ccccc}
    e^{t} & 0 & 0 & 0 & \cdots \\
    0 & e^{2t} & 0 & 0 & \cdots \\
    0 & 0 & e^{3t} & 0 & \cdots \\
    0 & 0 & 0 & e^{4t} & \cdots \\
    \vdots  & \vdots  & \vdots  & \vdots & \ddots \\
  \end{array}
\right).
\] For any $t\in \R$, we have that $e^{tD}=T(1\mid e^{-t})$ as a Riordan matrix. Therefore, the solution is $u(x,t)=T(1\mid e^{-t})(h)=e^{t}h(xe^{t})$.

\item[(ii)] In this more interesting example, we consider the matrix

\[
H =\left(
  \begin{array}{ccccc}
    0 & 0 & 0 & 0 & \cdots \\
    1 & 0 & 0 & 0 & \cdots \\
    0 & 2 & 0 & 0 & \cdots \\
    0 & 0 & 3 & 0 & \cdots \\
    \vdots  & \vdots  & \vdots  & \vdots & \ddots \\
  \end{array}
\right).
\] Note that $H=L(x,x)$ and so $H\in \mathcal{L}(\Ri(\K))$.  The PDE induced by $H$ is
\[ \frac{\partial u}{\partial t}-x^{2}\frac{\partial u}{\partial x}=xu(x,t) \]

The solution of the corresponding initial value problem
\[
\left\{
  \begin{array}{ll}
    \frac{\partial u}{\partial t}=xu(x,t)+x^{2}\frac{\partial u}{\partial x} \\
    u(x,0)=h(x)
  \end{array}
\right.
\] is given by $u(x,t)=e^{tH}(h)$. To compute this solution we are going to take advantage of some previous work of another authors. Particularly, we are going to follow \cite{Aceto2}, where the matrix $H$ is called the {\it creation matrix}. Formula $(9)$ in \cite[Page 233]{Aceto2} computes $e^{t\Lambda_{n-1}(H)}$. Moreover, it is obvious   that $P_{n-1}(e^{t\Lambda_{n}(H)})=e^{t\Lambda_{n-1}(H)}$ for any $t\in\R$ and any integer $n\geq2$ (see \cite[Page 542]{Lie} for the notation concerning $\Lambda_n$). Hence, in our Riordan matrix notation, we get $e^{tH}=T(1 \mid 1-xt)$ and the solution of the corresponding initial value problem is given by \[u(x,t)=e^{tH}(h)=\frac{1}{1-xt}h\left(\frac{x}{1-xt}\right).\]

Evaluating at $t=1$, we obtain $e^{H}=T(1\mid 1-x)$, which is the {\it Pascal triangle}.

\end{itemize}

\end{ejem}
\subsection{From the solution of a PDE to the matrix exponential: the Method of Characteristics}

 To point out how to go from the solution of a PDE to the exponential map of the Lie algebra to the Riordan group, we are going to deal with the following family of significant examples.

 For any couple of real numbers $a$, $b$ and any non-negative integer number $n$, consider the infinite lower triangular matrix $L^{a,b}_{n}=(d_{i,j}^{(n)})_{i,j\in \mathbb{N}}$, where $d_{i,j}^{(n)}=0$ if $i-j\neq n$ and $d_{i,j}^{(n)}=a +jb$ if $i-j= n$. How can we compute or describe the exponential of each of the matrices $L^{a,b}_{n}$?

Note that $L^{a,b}_{n}=L(ax^n , bx^n)$. This means that they belong to the Lie algebra of the Riordan group. Moreover,

\[
L^{a,b}_{0}=\left(
  \begin{array}{ccccc}
    a & 0 & 0 & 0 & \cdots \\
    0 & a+b & 0 & 0 & \cdots \\
    0 & 0 & a+2b & 0 & \cdots \\
    0 & 0 & 0 & a+3b & \cdots \\
    \vdots  & \vdots  & \vdots  & \vdots & \ddots \\
  \end{array}
\right),
\]

\[
L^{a,b}_1 =\left(
  \begin{array}{ccccc}
    0 & 0 & 0 & 0 & \cdots \\
    a & 0 & 0 & 0 & \cdots \\
    0 & a+b & 0 & 0 & \cdots \\
    0 & 0 & a+2b & 0 & \cdots \\
    \vdots  & \vdots  & \vdots  & \vdots & \ddots \\
  \end{array}
\right),
\] and so on. Observe that  $D=L^{1,1}_0$ and $H=L^{1,1}_1$, where $D$ and $H$ are the matrices considered in the previous examples.
The PDE induced by $L^{a,b}_{n}$ is
\[ \frac{\partial u}{\partial t}-bx^{n+1}\frac{\partial u}{\partial x}=ax^{n}u(x,t) .\]
Note that if $b=0$; then, $e^{tL^{a,0}_{n}}=T(e^{atx^{n}} \mid 1)$ is a Toeplitz matrix (in the Riordan group) for any $t\in\R$. Consequently,  the solution  of the corresponding initial value problem
\[
\left\{
  \begin{array}{ll}
    \frac{\partial u}{\partial t}=ax^{n}u(x,t) \\
    u(x,0)=h(x)
  \end{array}
\right.
\] is given by $u(x,t)=e^{tL^{a,0}_{n}}(h)=e^{atx^{n}}h(x)$.

From now on, assume $b\neq 0$. Consider the matrix $L^{a,b}_{n}$ and the corresponding  initial value problem
\begin{equation}\label{eq:1}
 \quad\left\{
  \begin{array}{ll}
    \frac{\partial u}{\partial t}-bx^{n+1}\frac{\partial u}{\partial x}=ax^{n}u(x,t) \\
    u(x,0)=h(x).
  \end{array}
\right.
\end{equation}
Let us use the Method of Characteristics to solve it (see \cite{john}, Chapter I). Suppose $t=t(r,s)$, $x=x(r,s)$ and $z=z(r,s)$. So we consider the following system of ODEs

\[
\left\{
  \begin{array}{lll}
    \frac{dt}{ds}=1 \\
    \frac{dx}{ds}=-bx^{n+1} \\
    \frac{dz}{ds}=ax^{n}z
  \end{array}
\right.
\]
with initial conditions
\[
\left\{
  \begin{array}{lll}
  t(r,0)=0 \\
  x(r,0)=r \\
  z(r,0)=h(r)
  \end{array}
\right.
\]
Integrating the system and imposing the initial conditions we get
  \[
\left\{
  \begin{array}{lll}
t=s\\
 \frac{1}{x^n}=\frac{1+nbsr^n}{r^n}\\
 z=(1+nbsr^{n})^{\frac{a}{nb}}h(r)
  \end{array}
\right.
\]
and
 \[
\left\{
  \begin{array}{lll}
s=t\\
r=\frac{x}{\sqrt[n]{1-nbtx^{n}}}\\
z=\left(\frac{1}{1-bntx^{n}}\right)^{\frac{a}{nb}}h\left(\frac{x}{\sqrt[n]{1-nbtx^{n}}}\right)
  \end{array}
\right. ,
\]
which yields that the solution of (\ref{eq:1}) is given by \[ u(x,t)=\left(\frac{1}{1-bntx^{n}}\right)^{\frac{a}{nb}}h\left(\frac{x}{\sqrt[n]{1-nbtx^{n}}}\right)\] or
\[ u(x,t)=\sqrt[n]{\left(\frac{1}{1-bntx^{n}}\right)^{\frac{a}{b}}}h\left(\frac{x}{\sqrt[n]{1-nbtx^{n}}}\right).\]
All above in this subsection may be summarized as
\begin{thm} Suppose that  $a$ and  $b$ are two real numbers and that $n$ is a non-negative integer number. Consider the  infinite lower triangular matrix  $L^{a,b}_{n}=(d_{i,j}^{(n)})_{i,j\in \mathbb{N}}$, where $d_{i,j}^{(n)}=0$ if $i-j\neq n$ and $d_{i,j}^{(n)}=a +jb$ if $i-j= n$. Then, $L^{a,b}_{n}$ is in the Lie algebra of the Riordan group and for any $t\in\R$ the Riordan matrix $e^{tL^{a,b}_{n}}$ is given by \[e^{tL^{a,b}_{n}}= \left\{
  \begin{array}{ll}
    T\left(\sqrt[n]{(1-bntx^{n})^{\frac{b-a}{b}}}\mid   \sqrt[n]{1-bntx^{n}}\right)   &     \text { if   $b\neq0$} \\
    T\left(e^{atx^{n}}\mid 1\right) & \text{if b=0}.
  \end{array}
\right.
\]
\end{thm}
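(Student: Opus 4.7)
My plan is to apply strategy (b) from the dichotomy: solve the initial value problem (\ref{eq:1}) explicitly, then read off the Riordan representation $e^{tL_n^{a,b}}=T(f(x,t)\mid g(x,t))$ by matching against $\tfrac{f}{g}\,h(x/g)$ and invoking uniqueness from Proposition~\ref{prop:initialValue}. The preliminary claim that $L_n^{a,b}\in\mathcal{L}(\Ri(\K))$ is immediate from the general form of the Lie algebra described in Section~2: taking $\chi(x)=ax^n$ and $\alpha(x)=bx^n$, the matrix $L(\chi,\alpha)$ has $a+jb$ on its $n$-th subdiagonal and zeros elsewhere, which is precisely the defining property of $L_n^{a,b}$.

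For $b=0$ I would handle the PDE $\partial_t u=ax^n u$ directly as an ODE in $t$ with $x$ as a parameter, yielding $u(x,t)=e^{atx^n}h(x)$; the FTRM immediately identifies this as $T(e^{atx^n}\mid 1)(h)$, so uniqueness gives $e^{tL_n^{a,0}}=T(e^{atx^n}\mid 1)$. For $b\neq0$ I run the characteristic system spelled out just above the statement and arrive at
\[
u(x,t)=(1-nbtx^n)^{-a/(nb)}\,h\!\left(\frac{x}{\sqrt[n]{1-nbtx^n}}\right).
\]
Comparing with $\tfrac{f(x,t)}{g(x,t)}\,h(x/g(x,t))$, the argument of $h$ forces $g(x,t)=\sqrt[n]{1-nbtx^n}$, and the prefactor then yields
\[
f(x,t)=g(x,t)\cdot(1-nbtx^n)^{-a/(nb)}=(1-nbtx^n)^{(b-a)/(nb)}=\sqrt[n]{(1-nbtx^n)^{(b-a)/b}},
\]
which matches the stated formula. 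Uniqueness of the solution in $\K[[x,t]]$ forbids any other choice of $f,g$ and closes the argument.

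The main point that requires care, as I see it, is not the calculation but the formal interpretation. The method of characteristics is classical for $C^\infty$ or analytic PDE on open subsets of $\R^2$, whereas the identity I need lives in $\K[[x,t]]$. Since $1-nbtx^n$ has constant term $1$, every fractional power is a well-defined element of $\K[[x,t]]$ via the binomial series, so the candidate $u$ is a bona fide formal power series in both variables. I would therefore treat the characteristic derivation merely as a way of guessing the formula, and confirm it by substituting the explicit expression into $\partial_t u-bx^{n+1}\partial_x u=ax^n u$ and checking $u(x,0)=h(x)$ by direct formal manipulation; both reduce to routine differentiations of fractional powers of $1-nbtx^n$. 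A minor caveat is the value $n=0$, for which $\sqrt[n]{\,\cdot\,}$ is meaningless; this case parallels Example (i) and is handled by a separate direct computation (whose outcome, $T(e^{(a-b)t}\mid e^{-bt})$, is obtained by the same characteristic method but with $x'=-bx$ instead of $x'=-bx^{n+1}$), so the stated formula is to be read for $n\geq 1$ when $b\neq 0$.
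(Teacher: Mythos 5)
Your proposal is correct and follows essentially the same route as the paper: the paper likewise derives the theorem by running the method of characteristics on the initial value problem for $\frac{\partial u}{\partial t}-bx^{n+1}\frac{\partial u}{\partial x}=ax^{n}u$, reading off $f(x,t)$ and $g(x,t)$ from the resulting solution, and treating $b=0$ separately as a Toeplitz matrix. Your two added cautions --- confirming the characteristic-method formula by direct substitution in $\K[[x,t]]$ since the fractional powers of $1-nbtx^{n}$ are well defined there, and observing that the stated expression (and the integration of $\frac{dx}{ds}=-bx^{n+1}$) only makes literal sense for $n\geq 1$ when $b\neq 0$, the case $n=0$ giving $T(e^{(a-b)t}\mid e^{-bt})$ --- are genuine refinements that the paper leaves implicit.
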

\section{A special involution as symmetry or time-reversal symmetry for flows in $\K[[x]]$ related to the Riordan group.}
There is an involution $M$ in the Riordan group playing a special role in such group. This matrix is
\[
M=T(-1|-1)=\left(
  \begin{array}{ccccc}
    1 & 0 & 0 & 0 & \cdots \\
    0 & -1 & 0 & 0 & \cdots \\
    0 & 0 & 1 & 0 & \cdots \\
    0 & 0 & 0 & -1 & \cdots \\
    \vdots  & \vdots  & \vdots  & \vdots & \ddots \\
  \end{array}
\right).
\]

Considered as an endomorphism in $\K[[x]]$, $M$ is continuous and has exactly two eigenvalues $1$ and $-1$. The corresponding eigenspaces are $E(1)=\{h\in \K[x]] /   h(x)=h(-x)\}$ and $E(-1)=\{h\in \K[x]] /  -h(x)=h(-x)\}$, i.e., they are the linear subspaces of even, respectively odd, formal power series. Both of them are closed subspaces in $\K[[x]]$ and we get $\K[[x]]=E(1)\oplus E(-1)$ and $(M-I)\circ (M+I)=(M+I)\circ (M-I)\equiv 0$, where $I=T(1\mid 1)$ is the identity.

The reason of the interest about $M$ is because it is used for defining what is known as a  pseudo-involution in the Riordan group. Following \cite{Nkwanta1}, we say that a Riordan matrix $R$ is a {\it pseudo-involution} if the product $RM$ is an involution, that is, $RMRM=I$.

In Group Theory we have the definitions of {\it reversible} and {\it strongly reversible} elements (see \cite{ofarrel2}). We recall it here for completeness.
\begin{defi}
\begin{itemize}
\item[(i)] An element $g$ of a group $G$ is said to be reversible in $G$ if there is another element $h$ of $G$ such that \[hgh^{-1} = g^{-1}.\] In this situation we also say that $h$ reverses $g$ and $h$ is a reverser of $g$.
    \item[(ii)] An element $g$ of a group $G$ is said to be strongly reversible in $G$ if there is an involution  $h$ of $G$ such that \[hgh^{-1} = g^{-1}.\]
\end{itemize}
\end{defi}

\begin{prop}If $R$ is a pseudo-involution, then $R$ is strongly reversible in the Riordan group and is the product of two involutions.
\end{prop}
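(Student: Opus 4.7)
The plan is to exploit directly the defining identity $RMRM=I$ of a pseudo-involution, together with the fact that $M=T(-1\mid-1)$ is itself an involution in $\mathcal{R}$, in order to produce explicitly both a reversing involution for $R$ and a factorization of $R$ into two involutions.

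First I would manipulate the relation $RMRM=I$ algebraically. Multiplying on the right by $M$ and using $M^2=I$ yields $RMR=M$. Multiplying this on the left by $R^{-1}$ and on the right by $M$ (using $M^{-1}=M$) gives
\[
MRM=R^{-1}.
\]
Since $M\in\mathcal{R}$ and $M^2=I$, this identity exhibits $M$ as an involution in the Riordan group conjugating $R$ to its inverse, so by the definition of strong reversibility $R$ is strongly reversible in $\mathcal{R}$, with $M$ itself serving as a reverser.

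For the second assertion, I would just observe that $R$ admits the trivial factorization
\[
R \;=\; (RM)\,M.
\]
By hypothesis, $RM$ is an involution (this is exactly what $RMRM=I$ says), and $M$ is an involution by inspection of $T(-1\mid-1)$. Hence $R$ is the product of two involutions in $\mathcal{R}$.

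There is really no main obstacle here: the argument is pure group–theoretic rewriting once the identity $RMRM=I$ is in hand. The only point to double-check is bookkeeping with $M=M^{-1}$ at each step, and the observation (already implicit in the paper's setup) that $M$ lives in the Riordan group so that all manipulations take place inside $\mathcal{R}$. One could also note the general fact that strong reversibility of $R$ via an involution $h$ is equivalent to $R$ being a product of two involutions (namely $h$ and $hR$), which makes the two conclusions formally equivalent; here both are witnessed concretely by $M$ and $RM$.
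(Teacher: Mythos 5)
Your proof is correct and follows essentially the same route as the paper: both derive $MRM=R^{-1}$ from $RMRM=I$ to get strong reversibility with reverser $M$. The only (cosmetic) difference is the factorization into involutions — you use $R=(RM)M$ directly, while the paper checks that $R^{-1}M$ is an involution and writes $R=M(R^{-1}M)$ — and your version is if anything slightly more economical.
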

\begin{proof}
Since $RMRM=I$ we directly obtain, multiplying  on the left by the inverse $R^{-1}$, that $MRM=R^{-1}$ so $R$ is strongly reversible. Moreover, $R^{-1}M$ is an involution because $(R^{-1}M)^{-1}=MR=MMR^{-1}M=R^{-1}M$. Consequently, we have that $R=MR^{-1}M$. 
\end{proof}

The above proposition  tells us that the pseudo-involutions are particular examples of {\it strongly reversible} elements in the Riordan group and $M$ is a reverser for any of them.

Consider the left and right translations in $\Ri(\K)$ given,
respectively, by
\[
\begin{matrix}
L_D& : & \Ri(\K) &  \longrightarrow & \Ri(\K)\\
&& X & \longmapsto & L_D(X)=DX,
\end{matrix}\qquad \qquad \begin{matrix}
R_D& : & \Ri(\K) &  \longrightarrow & \Ri(\K)\\
&& X & \longmapsto & R_D(X)=XD.
\end{matrix}
\]
Since the product is a $C^{\infty}$-function  both $L_D$ and $R_D$
are diffeomorphisms. We need to recall the following facts from \cite{Lie}.
\begin{prop}\label{T: conj}
Let $T(f\mid g)$ be a Riordan matrix. The tangent space $T_{T(f\mid
g)}\Ri(\K)$ to $\Ri(\K)$ at $T(f\mid g)$ is given by
\[
T_{T(f\mid g)}\Ri(\K)=\{T(f\mid g)L(\chi,\al)| \chi,
\al\in\K[[x]]\}= \{L(\chi,\al)T(f\mid g)| \chi, \al\in\K[[x]]\}.
\]
Moreover, the conjugation by $T(f\mid g)$ defined by
\[
\begin{matrix}
\text{conj}_{T(f\mid g)}& : & \Ri(\K) &  \longrightarrow & \Ri(\K)\\
&& X & \longmapsto & T(f\mid g)XT^{-1}(f\mid g)
\end{matrix}
\]
is a $C^{\infty}$-diffeomorphism and its tangent (or differential) map at the identity
\[
\begin{matrix}
D\text{conj}_{T(f\mid g)}(I)& : & \mathcal{L}(\Ri(\K)) &
\longrightarrow & \mathcal{L}(\Ri(\K))
\end{matrix}
\]
is given by
\[
D\text{conj}_{T(f\mid g)}(I)(L(\chi,\al))=T(f\mid
g)L(\chi,\al)T^{-1}(f\mid g).
\]
Finally, for any $t\in\K$ we have \large{
\[
e^{tD\text{conj}_{T(f\mid g)}(I)(L(\chi,\al))}=\text{conj}_{T(f\mid
g)}(e^{tL(\chi,\al)}).
\]}
\end{prop}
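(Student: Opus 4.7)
The plan is to derive all four assertions by exploiting a single structural observation: left and right multiplication by a fixed Riordan matrix $T(f\mid g)$ are restrictions of continuous \emph{linear} maps on the ambient Fr\'echet space of lower triangular matrices. Consequently their differentials at any point coincide with the maps themselves, and every statement in the proposition reduces to standard Lie-theoretic chain-rule manipulations.

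First I would handle the tangent space description. Since the product and inversion in $\Ri(\K)$ are smooth (recorded in \cite{Lie}), the left translation $L_{T(f\mid g)}$ is a diffeomorphism with inverse $L_{T^{-1}(f\mid g)}$, so its differential $DL_{T(f\mid g)}(I)$ is a topological linear isomorphism from $\mathcal{L}(\Ri(\K))$ onto $T_{T(f\mid g)}\Ri(\K)$. Because left matrix multiplication is itself linear, this differential is just $L(\chi,\al)\mapsto T(f\mid g)L(\chi,\al)$, yielding the first equality. The right-translation argument is identical and provides the second.

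Next, since $\text{conj}_{T(f\mid g)} = L_{T(f\mid g)}\circ R_{T^{-1}(f\mid g)}$ is a composition of two diffeomorphisms, it is a $C^\infty$-diffeomorphism itself (with inverse $\text{conj}_{T^{-1}(f\mid g)}$). The chain rule, together with the linearity of both translations, immediately produces the stated formula for $D\text{conj}_{T(f\mid g)}(I)$. I should also verify that $T(f\mid g)L(\chi,\al)T^{-1}(f\mid g)$ actually lies in $\mathcal{L}(\Ri(\K))$; this follows abstractly from the fact that the differential at the identity of a Lie group automorphism is a Lie algebra automorphism, so the codomain is automatic.

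For the last identity, I would fix $L=L(\chi,\al)$ and consider the curve $\sigma(t)=\text{conj}_{T(f\mid g)}(e^{tL})$. Since conjugation is a group homomorphism, $\sigma$ is a one-parameter subgroup of $\Ri(\K)$. The chain rule gives $\sigma'(0)=D\text{conj}_{T(f\mid g)}(I)(L)$, so by the uniqueness statement of Proposition~\ref{prop:initialValue} applied to the vector field associated with $D\text{conj}_{T(f\mid g)}(I)(L)$ and initial condition $I$, we obtain $\sigma(t)=e^{tD\text{conj}_{T(f\mid g)}(I)(L)}$, which is exactly the claimed identity. The main obstacle in a fully concrete treatment is the intermediate step of recognising the conjugate $T(f\mid g)L(\chi,\al)T^{-1}(f\mid g)$ as some explicit $L(\chi',\al')$; the abstract Lie-theoretic argument sketched here bypasses this, but producing explicit formulas for the new pair $(\chi',\al')$ would require the Fundamental Theorem for Riordan matrices, together with the product formula in the Riordan group.
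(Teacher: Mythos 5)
Your argument is correct in all essentials, but note that this paper contains no proof of Proposition \ref{T: conj} to compare against: the result is explicitly recalled from \cite{Lie}. Your route is the abstract Lie-theoretic one — tangent spaces as images of $\mathcal{L}(\Ri(\K))$ under the differentials of the translations (which, being restrictions of continuous linear maps, equal their own differentials), conjugation as a composition of two translations, the codomain of $D\text{conj}_{T(f\mid g)}(I)$ from the general fact that the differential at the identity of a Lie group automorphism is a Lie algebra automorphism, and the exponential identity from uniqueness of one-parameter subgroups. Two caveats are worth making explicit. First, the smooth structure on $\Ri(\K)$ is defined through the chart $\varphi_{\infty}$ (first column plus $A$-sequence), not as a submanifold of a matrix space, so the displayed equality identifying $T_{T(f\mid g)}\Ri(\K)$ with a set of actual matrices presupposes that the entrywise inclusion of $\Ri(\K)$ into lower triangular matrices is smooth and that entrywise differentiation of curves computes tangent vectors; this is established in \cite{Lie} and should be invoked rather than taken for granted. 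Second, Proposition \ref{prop:initialValue} is stated for initial data $h\in\K^{\N}$, not for matrix-valued curves, so to get the last identity you should either apply it column by column or verify directly that $\sigma(t)=T(f\mid g)e^{tL}T^{-1}(f\mid g)$ satisfies $\sigma'(t)=\bigl(T(f\mid g)LT^{-1}(f\mid g)\bigr)\sigma(t)$, which holds because multiplication by a fixed lower triangular matrix commutes with entrywise differentiation (each entry of the product being a finite sum). The treatment in \cite{Lie} is more computational: it identifies $T(f\mid g)L(\chi,\al)T^{-1}(f\mid g)$ explicitly as $L(\tilde{\chi},\tilde{\al})$ via the Fundamental Theorem and the product formula, which is precisely what yields formula (\ref{eq:chialpha}) needed later for Proposition \ref{T: eigen}; your abstract argument is shorter and cleaner but, as you acknowledge, does not produce those explicit formulas.
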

From this, we deduce
\begin{cor}
Given $T(f\mid g)\in\Ri(\K)$ and
$L(\chi,\al)\in\mathcal{L}(\Ri(\K))$, there exists a unique
$L(\tilde{\chi},\tilde{\al})\in\mathcal{L}(\Ri(\K))$ such that
\[
T(f\mid g)L(\chi,\al)=L(\tilde{\chi},\tilde{\al})T(f\mid g)
\]
or equivalently
\[
D\text{conj}_{T(f\mid
g)}(I)(L(\chi,\al))=L(\tilde{\chi},\tilde{\al}).
\]
Moreover,
\begin{equation}\label{eq:chialpha}
\tilde{\chi}=\frac{\chi\left(\frac{x}{g}\right)(g-xg')f-x\al\left(\frac{x}{g}\right)(f'g-g'f)}{f(g-xg')},
\qquad \tilde{\al}=\frac{g\al\left(\frac{x}{g}\right)}{g-xg'}.
\end{equation}
\end{cor}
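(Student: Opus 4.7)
The plan is to combine Proposition~\ref{T: conj} with the Fundamental Theorem for Riordan Matrices and the explicit description of $L(\chi,\al)$ as a vector field $h\mapsto \chi(x)h(x)+x\al(x)h'(x)$ on $\K[[x]]$. The existence and uniqueness part is essentially free: by Proposition~\ref{T: conj}, the tangent space $T_{T(f\mid g)}\Ri(\K)$ admits both presentations $T(f\mid g)L(\chi,\al)$ and $L(\tilde{\chi},\tilde{\al})T(f\mid g)$, so there is one and only one $L(\tilde\chi,\tilde\al)\in\mathcal{L}(\Ri(\K))$ making the displayed equality true. This also identifies $L(\tilde\chi,\tilde\al)$ with $D\text{conj}_{T(f\mid g)}(I)(L(\chi,\al))$.

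To derive the explicit formulas for $\tilde\chi$ and $\tilde\al$, I would evaluate both sides of $T(f\mid g)L(\chi,\al)(h)=L(\tilde\chi,\tilde\al)T(f\mid g)(h)$ on an arbitrary $h\in\K[[x]]$, using the FTRM as the computational engine. On the left, FTRM gives
\[
T(f\mid g)\bigl(\chi h+x\al h'\bigr)=\frac{f}{g}\chi\!\left(\frac{x}{g}\right)h\!\left(\frac{x}{g}\right)+\frac{xf}{g^{2}}\al\!\left(\frac{x}{g}\right)h'\!\left(\frac{x}{g}\right).
\]
On the right, applying $L(\tilde\chi,\tilde\al)$ to $\frac{f}{g}h(\frac{x}{g})$ and using the product rule together with $\frac{d}{dx}h(\frac{x}{g})=h'(\frac{x}{g})\frac{g-xg'}{g^{2}}$ produces
\[
\tilde\chi\,\frac{f}{g}h\!\left(\frac{x}{g}\right)+x\tilde\al\!\left[\frac{f'g-fg'}{g^{2}}h\!\left(\frac{x}{g}\right)+\frac{f(g-xg')}{g^{3}}h'\!\left(\frac{x}{g}\right)\right].
\]

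Now I would argue that the coefficients of $h(x/g)$ and of $h'(x/g)$ must match separately: testing the identity on the two specific choices $h\equiv 1$ and $h(x)=x$ (which have independent images under $h\mapsto(h(x/g),h'(x/g))$) forces this. Equating the coefficient of $h'(x/g)$ yields $\frac{xf}{g^{2}}\al(\frac{x}{g})=x\tilde\al\frac{f(g-xg')}{g^{3}}$, from which $\tilde\al=\frac{g\al(x/g)}{g-xg'}$, matching (\ref{eq:chialpha}). Substituting this into the equation obtained from the coefficient of $h(x/g)$ and clearing denominators produces exactly the stated formula for $\tilde\chi$.

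The only mildly delicate step is the independence argument: one must ensure that the asserted formulas actually define elements of $\K[[x]]$, i.e.\ that the apparent denominators $f$, $g-xg'$ and $g$ in $\chi(x/g)$ are really invertible in $\K[[x]]$. This follows since $f(0)\neq 0$, $g(0)\neq 0$, and $(g-xg')(0)=g(0)\neq 0$, so all three are units in $\K[[x]]$. With this last verification the proof is complete; no further input from Lie theory is needed beyond what Proposition~\ref{T: conj} already provides.
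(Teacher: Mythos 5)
Your proof is correct. The paper itself gives no proof of this corollary (it is introduced only with ``From this, we deduce,'' leaning on Proposition \ref{T: conj} and deferring details to \cite{Lie}), so the comparison is with the implicit argument: your route via the FTRM is exactly the natural one. I checked the computation: equating the $h'(x/g)$ coefficients gives $\tilde\al=\frac{g\al(x/g)}{g-xg'}$, and back-substituting into the $h(x/g)$ equation gives $\tilde\chi=\chi(x/g)-\frac{x\al(x/g)(f'g-g'f)}{f(g-xg')}$, which is precisely (\ref{eq:chialpha}) after putting it over a common denominator. Your separation argument (testing on $h\equiv1$ and $h=x$, the first isolating the $h(x/g)$ coefficient and the second then isolating the $h'(x/g)$ coefficient) is sound, and the unit checks on $f$, $g$ and $g-xg'$ are the right final verification; uniqueness could also be read off directly from the invertibility of $T(f\mid g)$ together with the injectivity of $(\chi,\al)\mapsto L(\chi,\al)$, but your appeal to the two presentations of the tangent space in Proposition \ref{T: conj} serves the same purpose.
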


As a consequence, we obtain
\begin{prop}\label{T: eigen} Consider the involution $M$ and the $C^{\infty}$-diffeomorphism \[\begin{matrix}
\text{conj}_{M}& : & \Ri(\K) &  \longrightarrow & \Ri(\K)\\
&& X & \longmapsto & MXM.
\end{matrix}
\]
Then
\begin{itemize}
\item[(i)] the tangent (or differential) map at identity
\[
\begin{matrix}
D\text{conj}_{M}(I)& : & \mathcal{L}(\Ri(\K)) &
\longrightarrow & \mathcal{L}(\Ri(\K))
\end{matrix}
\]
is a linear involution
\item[(ii)] for any couple  of real numbers $a$ and  $b$ such that $a\neq 0$ and $b\neq 0$ the  matrix $L^{a,b}_{n}$ is an eigenvector of $D\text{conj}_{M}(I)$ corresponding to the eigenvalue $-1$ if $n$ is odd and an eigenvector of $D\text{conj}_{M}(I)$ corresponding to the eigenvalue $1$ if $n$ is even (including $n=0$).
\end{itemize}
\end{prop}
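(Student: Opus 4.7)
The plan is to handle part (i) by a short functorial chain-rule argument and part (ii) by a direct computation using formula~(\ref{eq:chialpha}) from the preceding corollary.

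For (i), I would begin by observing that $M=T(-1\mid -1)$ is itself an involution in $\Ri(\K)$, so $\text{conj}_M\circ \text{conj}_M=\text{id}_{\Ri(\K)}$ as a diffeomorphism. Since $\text{conj}_M(I)=MIM=I$, differentiating both sides at $I$ via the chain rule yields
\[
D\text{conj}_M(I)\circ D\text{conj}_M(I)=D(\text{id}_{\Ri(\K)})(I)=\text{id}_{\mathcal{L}(\Ri(\K))},
\]
so the tangent map squares to the identity, and is therefore a linear involution on the Lie algebra.

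For (ii), I would substitute $f=g=-1$ (so that $f'=g'=0$) into the expressions for $\tilde{\chi}$ and $\tilde{\al}$ given by~(\ref{eq:chialpha}). The second summand in the numerator of $\tilde{\chi}$ carries the factor $f'g-g'f$ and therefore vanishes, and the remaining data $x/g=-x$, $g-xg'=-1$, and $f(g-xg')=1$ collapse the calculation to $\tilde{\chi}(x)=\chi(-x)$ and $\tilde{\al}(x)=\al(-x)$. In other words, $D\text{conj}_M(I)$ acts on $\mathcal{L}(\Ri(\K))$ by applying the substitution $x\mapsto -x$ to both components of the generating pair. Specialising to $L^{a,b}_{n}=L(ax^{n},bx^{n})$, this substitution scales each of $\chi$ and $\al$ by $(-1)^{n}$; invoking the linearity of $(\chi,\al)\mapsto L(\chi,\al)$ then yields $D\text{conj}_M(I)(L^{a,b}_{n})=(-1)^{n}L^{a,b}_{n}$. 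The hypotheses $a\neq 0$ and $b\neq 0$ are needed only to ensure that $L^{a,b}_n$ is a non-zero vector, so that it qualifies as a genuine eigenvector.

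There is no significant obstacle; the only delicacy is carrying out the substitutions in~(\ref{eq:chialpha}) carefully in the presence of the signs forced by $f=g=-1$, and noticing that the seemingly intricate formula degenerates into the elementary parity symmetry $x\mapsto -x$, from which the sign $(-1)^{n}$ can be read off at once.
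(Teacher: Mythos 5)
Your proposal is correct and follows essentially the same route as the paper: part (i) from the fact that $\text{conj}_M$ is an involution fixing $I$, and part (ii) by substituting $f=g=-1$ into formula~(\ref{eq:chialpha}) to see that $D\text{conj}_M(I)$ sends $L(\chi,\al)$ to $L(\chi(-x),\al(-x))$, whence the eigenvalue $(-1)^n$ on $L^{a,b}_n=L(ax^n,bx^n)$. Your write-up is in fact slightly more explicit than the paper's (which leaves the substitution implicit and just notes $-L(\chi,\al)=L(-\chi,-\al)$), and your remark that $a\neq0$, $b\neq0$ only serve to make $L^{a,b}_n$ a nonzero vector is a correct reading of the hypotheses.
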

\begin{proof} It is clear that $D\text{conj}_{M}(I)\circ D\text{conj}_{M}(I)=I_{\mathcal{L}(\Ri(\K))}$ because $M$ is an involution in $\Ri(\K)$, where $I_{\mathcal{L}(\Ri(\K))}$ represents the identity map in $\mathcal{L}(\Ri(\K)) .$ To prove (ii), recall that $M=T(-1\mid -1)$ and that $L^{a,b}_{n}=L(ax^n , bx^n)$. From the previous corollary we get \[
D\text{conj}_{M}(I)(L(\chi, \al))=L(\tilde{\chi},\tilde{\al})\] where 
$\tilde{\chi}$ and $\tilde{\al}$ are given by (\ref{eq:chialpha}). In this case, $f=g=-1$ and $\chi(x)=ax^{n}$, $\al(x)=bx^{n}.$ For the case that $n$ even, the announced result is obvious. The same also holds for the case $n$ is odd because $ -L(\chi,\al)=L(-\chi,-\al)$ for any $\chi$, $\al$ in $\K[[x]].$
\end{proof}

Finally, we obtain the following (here we use \cite{Lam-Rob} for some of the definitions appearing below). 
\begin{thm}\label{T: TR} Let  $a$ and $b$ two real numbers such that $a=b=0$ does not hold. Suppose also that  $n$ is a non-negative integer number. Consider the infinite lower triangular matrix $L^{a,b}_{n}$. Then we have the following. 
\begin{itemize}
\item[(i)] If $n$ is even, the one-parameter subgroup $\{e^{tL^{a,b}_{n}}\}_{t\in \R}$ is contained in the centralizer of the involution $M$. In other words, $M$ is a symmetry for the flow  $\{e^{tL^{a,b}_{n}}\}_{t\in \R}$ in $\K[[x]]$.
    \item[(ii)] If $n$ is odd, any element in the one-parameter subgroup $\{e^{tL^{a,b}_{n}}\}_{t\in \R}$ is a pseudo-involution and the involution $M$ is a time-reversal symmetry of the flow $\{e^{tL^{a,b}_{n}}\}_{t\in \R}$ in $\K[[x]]$.
\end{itemize}
\end{thm}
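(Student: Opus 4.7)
The plan is to combine the exponentiation--conjugation compatibility from Proposition \ref{T: conj} with the eigenvalue computation in Proposition \ref{T: eigen}(ii). Specializing $T(f\mid g)=M$ (so that $M^{-1}=M$) in Proposition \ref{T: conj} yields the clean intertwining identity
\[
M\,e^{tL}\,M \;=\; e^{\,tD\text{conj}_{M}(I)(L)}
\]
valid for every $L\in\mathcal{L}(\Ri(\K))$ and every $t\in\R$. Substituting $L=L^{a,b}_{n}$ and using that, by Proposition \ref{T: eigen}(ii), $D\text{conj}_{M}(I)(L^{a,b}_{n})=(-1)^{n}L^{a,b}_{n}$, both parts of the theorem will then reduce to a short formal argument; the two cases decouple by parity.

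For part (i) I use that when $n$ is even the eigenvalue is $+1$, so the intertwining identity becomes $Me^{tL^{a,b}_{n}}M=e^{tL^{a,b}_{n}}$. This says that every element of $\{e^{tL^{a,b}_{n}}\}_{t\in\R}$ commutes with $M$, i.e.\ that the one-parameter subgroup lies in the centralizer of $M$; equivalently, $M$ is a symmetry of the flow in the sense of \cite{Lam-Rob}.

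For part (ii), when $n$ is odd the eigenvalue is $-1$ and the intertwining identity specializes to
\[
M\,e^{tL^{a,b}_{n}}\,M \;=\; e^{-tL^{a,b}_{n}} \;=\; \bigl(e^{tL^{a,b}_{n}}\bigr)^{-1}.
\]
Writing $R_{t}:=e^{tL^{a,b}_{n}}$, the identity $MR_{t}M=R_{t}^{-1}$ has two immediate consequences. First, by associativity, $R_{t}MR_{t}M = R_{t}(MR_{t}M) = R_{t}R_{t}^{-1}=I$, so $R_{t}$ is a pseudo-involution in the sense recalled earlier in the section. Second, interpreting $M$ as an endomorphism of $\K[[x]]$ (by the FTRM it acts as $h(x)\mapsto h(-x)$), the same identity reads $M\circ R_{t}\circ M^{-1}=R_{-t}$, which is precisely the defining property of a time-reversal symmetry of the flow $\{R_{t}\}_{t\in\R}$ (cf.\ \cite{Lam-Rob}).

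I do not anticipate any real obstacle: once the eigenvector analysis of $D\text{conj}_{M}(I)$ has been done (Proposition \ref{T: eigen}) and the compatibility between exponentiation and conjugation is available (Proposition \ref{T: conj}), the whole argument is essentially a one-line manipulation. The only mild care to take is the parity bookkeeping that separates the two cases and the observation, crucial for the pseudo-involution computation, that $M^{2}=I$ converts ``conjugation by $M$'' into ``sandwiching by $M$''.
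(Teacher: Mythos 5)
Your proposal is correct and follows essentially the same route as the paper's own proof: both combine the identity $e^{tD\text{conj}_{M}(I)(L)}=Me^{tL}M$ from Proposition \ref{T: conj} with the eigenvalue $(-1)^{n}$ from Proposition \ref{T: eigen}(ii), and both verify the pseudo-involution claim via the same computation $e^{tL^{a,b}_{n}}Me^{tL^{a,b}_{n}}M=e^{tL^{a,b}_{n}}e^{-tL^{a,b}_{n}}=I$. The only cosmetic difference is that you state the reversibility relation directly as the time-reversal property $M\circ R_{t}\circ M^{-1}=R_{-t}$, where the paper phrases it through the notions of reversible and strongly reversible elements.
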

\begin{proof} Suppose $n$ is even. Using Proposition \ref{T: eigen}, we have $D\text{conj}_{M}(I)(L^{a,b}_{n})=L^{a,b}_{n}$. Hence \[e^{tL^{a,b}_{n}}=e^{tD\text{conj}_{M}(I)(L^{a,b}_{n})}=Me^{tL^{a,b}_{n}}M.\] The last equality above is a consequence of Proposition \ref{T: conj}. Consequently, $e^{tL^{a,b}_{n}}$ is in the centralizer of the involution $M$ for every $ t\in \R$.

Suppose now that $n$ is odd. By analogous arguments, we first have that $D\text{conj}_{M}(I)(L^{a,b}_{n})=-L^{a,b}_{n}$ and then
\[e^{-tL^{a,b}_{n}}=e^{tD\text{conj}_{M}(I)(L^{a,b}_{n})}=Me^{tL^{a,b}_{n}}M.\] Therefore, the Riordan matrix $e^{tL^{a,b}_{n}}$ is reversible for any $ t\in \R $ and the involution $M$ is a reverser for all of them. This fact implies that $e^{tL^{a,b}_{n}}$ is strongly reversible and that  $e^{tL^{a,b}_{n}}$ is a pseudo-involution because $e^{tL^{a,b}_{n}}M$ is an involution ($e^{tL^{a,b}_{n}}Me^{tL^{a,b}_{n}}M=e^{tL^{a,b}_{n}}e^{-tL^{a,b}_{n}}=I)$.
\end{proof}
\begin{cor}\label{T: CTR}The same as in the above theorem is true, word by word, changing the involution $M$ by the involution $-M=T(1 \mid -1).$
\end{cor}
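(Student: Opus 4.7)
The plan is to reduce the corollary to Theorem \ref{T: TR} via the elementary observation that conjugation by $-M$ coincides with conjugation by $M$ on $\Ri(\K)$. First I would verify that $-M=T(1\mid -1)$ is an involution of the Riordan group: a direct application of the product formula gives $T(1\mid -1)T(1\mid -1)=T(1\mid 1)=I$.

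Next comes the key step. For any $X\in\Ri(\K)$,
\[
\text{conj}_{-M}(X)=(-M)X(-M)^{-1}=(-M)X(-M)=MXM=\text{conj}_{M}(X),
\]
where the two sign changes cancel. Hence $\text{conj}_{-M}=\text{conj}_{M}$ as $C^{\infty}$-diffeomorphisms of $\Ri(\K)$, and by taking tangent maps at the identity, $D\text{conj}_{-M}(I)=D\text{conj}_{M}(I)$ as linear maps on $\mathcal{L}(\Ri(\K))$. In particular, by Proposition \ref{T: eigen}(ii), the matrix $L^{a,b}_{n}$ is an eigenvector of $D\text{conj}_{-M}(I)$ with eigenvalue $+1$ when $n$ is even and $-1$ when $n$ is odd.

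From here the proof of each part just mirrors Theorem \ref{T: TR}. For $n$ even, Proposition \ref{T: conj} combined with the eigenvalue relation gives $e^{tL^{a,b}_{n}}=e^{tD\text{conj}_{-M}(I)(L^{a,b}_{n})}=(-M)e^{tL^{a,b}_{n}}(-M)$, placing the one-parameter subgroup in the centralizer of $-M$. For $n$ odd, the same scheme yields $e^{-tL^{a,b}_{n}}=(-M)e^{tL^{a,b}_{n}}(-M)$, so $-M$ is a reverser of every $e^{tL^{a,b}_{n}}$, hence a time-reversal symmetry. The pseudo-involution property transfers as well: $e^{tL^{a,b}_{n}}(-M)e^{tL^{a,b}_{n}}(-M)=e^{tL^{a,b}_{n}}Me^{tL^{a,b}_{n}}M=I$ by Theorem \ref{T: TR}(ii), again thanks to the sign cancellation. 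There is no real obstacle: the whole corollary rests on the triviality that $(-M)X(-M)=MXM$, so $M$ and $-M$ induce the same inner automorphism of $\Ri(\K)$, and the entire proof of the theorem transfers verbatim.
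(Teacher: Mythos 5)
Your proposal is correct. The paper states this corollary without proof, and your key observation --- that $(-M)X(-M)=MXM$, so $M$ and $-M$ induce the identical inner automorphism of $\Ri(\K)$ and hence the identical tangent map at $I$ --- is exactly the right reduction: it makes every step of Theorem \ref{T: TR} (centralizer membership for $n$ even, reversal and the pseudo-involution computation for $n$ odd) transfer verbatim, and your verification that $T(1\mid -1)$ is an involution via the product formula is also correct.
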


\begin{section}{On some  dynamical properties  of Pascal Triangle}

Using some of the results obtained herein we can get some new information about the Riordan group. For example, in \cite{Lie}  we recognized as a subgroup of the Riordan group, the {\it substitution group of formal power series}. This  group was introduced in \cite{Jenn} (see also \cite{Babenko} for a good survey about it) where  results about topological generation are established. One can notice at once that some of our one-parameter groups $\{e^{tL^{a,b}_{n}}\}_{t \in \R}$ are involved in \cite[Section 3]{Jenn}. In this section, we focus only on the Pascal Triangle and  assign to it others of the many properties related to patterns and symmetries that it has.

We will use the pro-Lie group structure of the Riordan group. So, we will consider elements of the Riordan group and of the Lie algebra as approximated by the components  of the corresponding points in the inverse limit interpretation of both $\Ri(\K)$ and $\mathcal{L}(\Ri(\K))$ (see \cite{Lie} for the pro-structure of $\mathcal{L}(\Ri(\K))$).  

To clarify the above approaching process recall that related to any problem \begin{equation}\label{E_: GEN}
 \gamma'(t)=L(\gamma(t))
\end{equation} with $L\in\mathcal{L}(\Ri(\K))$ or, equivalently \[\frac{\partial u}{\partial t}=\chi(x)u(x,t)+x\al(x)\frac{\partial u}{\partial x},\] if $L=L(\chi(x), \al(x))$,  we have a sequence of finite dimensional problems, denoted by $\{(\ref{E_: GEN})_{n}\}_{n\in \N}$. We call this sequence as the sequence of {\it approaching}  problems of  $(\ref{E_: GEN})$.

{\bf Problem $(\ref{E_: GEN})_{n}$: Approaching problems.}
\textit{
Consider the Euclidean space $\R^{n+1}$. For any $x\in \R^{n+1}$ denote by $x=(x_{0}, x_{1}, \cdots, x_{n})$ to its usual components. Suppose $x^{T}$ represents the transpose matrix of $x.$ Let $x:\R\longrightarrow \R^{n+1}$ be any derivable curve given by $x(t)=(x_{0}(t), x_{1}(t), \cdots, x_{n}(t)).$ We denote by $x'(t)=(x'_{0}(t), x'_{1}(t), \cdots, x'_{n}(t))$ the derivative of $x$  at $t$, where $x'_{i}$ is the usual derivative of a real function with real variable for any $i=0,\cdots, n$. When we refer to the {\it approaching problem} $(\ref{E_: GEN})_{n}$,  we refer to the linear differential equation \[x'^{T}= D\Pi_{n}(I)(L)x^{T}.\] In the above equation $D\Pi_{n}(I)$ represents the differential at the identity $I$ in the Riordan group $\Ri(\K)$ of the projection \[\Pi_{n}:\Ri(\K)\longrightarrow \Ri_{n}(\K)\] in the pro-Lie group structure of $\Ri(\K)$, see again \cite{Lie} if needed.}

\subsection{The approaching problems related to Pascal Triangle}

Recall that, Pascal Triangle, is the time $1$ map of the dynamical system generated by the linear differential equation in $\K[[x]]$
\begin{equation}\label{E_:ECU}
 \gamma'(t)=L^{1,1}_1(\gamma(t)),
\end{equation} where $L^{1,1}_1=L(x,x)$, or, equivalently, \[ \frac{\partial u}{\partial t}=xu + x^{2}\frac{\partial u}{\partial x} \] in $\K[[x,t]].$
Recall also that $\gamma(t)\in \R[[x]]$ for any $t\in \R$ and that $L^{1,1}_1(\gamma(t))=x\gamma(t)+x^{2}\frac{d\gamma(t)}{dx}$, where
$\frac{d}{dx}$ is the formal derivative in $\R[[x]]$. While $\gamma:\R \longrightarrow\R[[x]]$ is a curve and $\gamma'(t)$ is the usual derivative in $t$, when we consider $\R[[x]]$ identified with $\R^\N$ with the product topology. Using the pro-Lie group structure in $\Ri(\K)$, the corresponding pro-Lie algebra structure in $\mathcal{L}(\Ri(\K))$ and recalling that \[
L^{1,1}_1 =\left(
  \begin{array}{ccccc}
    0 & 0 & 0 & 0 & \cdots \\
    1 & 0 & 0 & 0 & \cdots \\
    0 & 2 & 0 & 0 & \cdots \\
    0 & 0 & 3 & 0 & \cdots \\
    \vdots  & \vdots  & \vdots  & \vdots & \ddots \\
  \end{array}
\right).
\]

We can associate to problem $(\ref{E_:ECU})$ a countably infinite  family of finite dimensional problems $(\ref{E_:ECU})_{n}$ in the euclidean space $\R^{n+1}$ for any non-negative integer $n$.  As we said before, we  will interpret the family of problems $\{(\ref{E_:ECU})_{n}\}_{n\in \N}$ as {\it approaching} the problem $(\ref{E_:ECU})$ when $n$ tends to $\infty$. For the first few values of $n$ we have:
\begin{itemize}
\item[$(\ref{E_:ECU})_{0}$] for $n=0$, we consider the differential equation $x'_{0}=0$ in the one dimensional euclidean space $\R$.
\item[$(\ref{E_:ECU})_{1}$] for $n=1$, we consider the differential equation in $\R^{2}$
\[\begin{pmatrix}x'_{0} (t) \\ x'_{1} (t) \end{pmatrix} = \begin{pmatrix}0&0\\  1& 0
\end{pmatrix}\begin{pmatrix}x_{0} (t) \\ x_{1} (t) \end{pmatrix} \]
\item[$(\ref{E_:ECU})_{2}$] for $n=2$, we consider the differential equation in $\R^{3}$
\[\begin{pmatrix}x'_{0} (t) \\ x'_{1} (t) \\x'_2 (t) \end{pmatrix} = \begin{pmatrix}0&0&0\\  1 & 0 & 0 \\ 0 & 2 & 0
\end{pmatrix}\begin{pmatrix}x_{0} (t) \\ x_{1} (t) \\x_{2}(t) \end{pmatrix} \]
\item[$(\ref{E_:ECU})_{3}$] for $n=3$, we consider the differential equation in $\R^{4}$
\[\begin{pmatrix}x'_{0} (t) \\ x'_{1} (t) \\x'_2 (t) \\x'_{3}(t) \end{pmatrix} = \begin{pmatrix}0&0&0&0\\  1 & 0 & 0 & 0\\ 0 & 2 & 0 & 0 \\ 0 & 0 & 3 & 0
\end{pmatrix}\begin{pmatrix}x_{0} (t) \\ x_{1} (t) \\x_{2}(t) \\ x_{3} (t) \end{pmatrix} \]

\end{itemize}
and so on.

The problem $(\ref{E_:ECU})_{0}$ is very easy to analyze. Any point in the phase space $\R$ is an equilibrium point. The phase flow is trivial and nothing is moving under it. 

Let us now consider $(\ref{E_:ECU})_{1}$. The equilibrium points in this case are just the points in the $x_{1}$-axe, which means that they are of the form $(0,b)$, where $b\in \R$. All of them are unstable in the Liapunov sense. The rest of the orbits are the straight lines $x_{0}=a$ for a fix non-null $a\in \R.$ Through any orbit the motion is uniform. In the semiplane $x_{0}>0$ the sense of the motion is increasing, respect to the $x_{1}$-axe as $t$ increases, i.e., the particle comes from the $-\infty$ part respect to the $x_{1}$-axe and goes to (positive) $\infty$ of such axe as $t$ increases. In the semiplane $x_{0}<0$ the sense of the motion is the opposite one. The constant speed of the motion in the orbit $x_{0}=a$  is $\mid a \mid$, the absolute value of $a.$ Particles move quickly for large values of $\mid a \mid$ and slowly for small values $\mid a \mid$ and they do not move in the $x_{1}$-axe. Therefore, there seems to be something in the $x_{1}$-axe slowing down the motion. We can deduce all above only knowing that the corresponding velocity vector field for the  equation $(\ref{E_:ECU})_{1}$, in the euclidean plane,  is given by $X(a,b)=(0,a)$. We can also compute, quickly and easily the solution of problem  $(\ref{E_:ECU})_{1}$ with initial condition  $x_{0}=a$ and $x_{1}=b$ because the corresponding matrix is nilpotent. It is the curve $x(t)=(a, at+b)$.

From  Theorem $\ref{T: TR}$ and Corollary $\ref{T: CTR}$, we get that $\Pi_1 (M)=\begin{pmatrix}1&0\\  0& -1
\end{pmatrix}$  and $\Pi_1(-M)=\begin{pmatrix}-1&0\\  0& 1
\end{pmatrix}$ are time-reversal symmetries for the flow generated by the equation $(\ref{E_:ECU})_{1}$. Consider the orbit $\theta(a,b)=\{(a,at+b), a\neq0, b, t \in \R \}$ of $(\ref{E_:ECU})_{1}$. Then it is symmetric respect to the involution $\Pi_1 (M)$, see Definition 4.1 in \cite{Lam-Rob}. This, in particular, means that if we transform the orbit $\theta(a,b)$ by  $\Pi_1 (M)$ we get again $\theta(a,b)$ but the parametrization obtained by means of $t$ is not a solution. However, if we finally change $t$ by $-t$ in such obtained curve we have a solution of $(\ref{E_:ECU})_{1}$; in this case the initial value at $t=0$ is $(a,-b)$. Of course, $\theta(a,b)=\theta(a,-b)$.

What is the behaviour under the action of the involution $\Pi_1(-M)$? if we transform the orbit $\theta(a,b)$ by means of $\Pi_1(-M)$, we get another {\it different } orbit of $(\ref{E_:ECU})_{1}$. In fact, we obtain  $\Pi_1(-M)(\theta(a,b))=\theta(-a,b)$ and $\theta(a,b)\neq\theta(-a,b). $ Again, the parametrization so obtained by means of $t$ is not a solution of $(\ref{E_:ECU})_{1}$. But, again, if we change $t$ by $-t$ we get another solution but with {\it different} orbit. Finally see that the composition of both time reversal symmetries is really a {\it symmetry} for $(\ref{E_:ECU})_{1}$. Then we obtain that $-I$ is a symmetry and it implies that if  $x(t)=(a, at+b)$ is a solution of the problem with initial condition $x(0)=(a, b)$, then $-x(t)=-I(x(t))$ is a solution with initial condition $(-a,-b).$

\begin{rmk} Note that any orbit $x_{0}=a\neq0$ of $(\ref{E_:ECU})_{1}$  is symmetric respect to the time reversal symmetry $\Pi_1(M)$. On the contrary, no orbit (except for equilibrium points) is symmetric respect to the time reversal symmetry $\Pi_1(-M).$
\end{rmk}

We propose to the reader the beautiful exercise of analysing the problem $(\ref{E_:ECU})_{2}$ and the role of the involutions $\Pi_2 (M)=\begin{pmatrix}1&0&0\\  0& -1&0 \\ 0&0&1
\end{pmatrix}$ and $\Pi_2 (-M)=\begin{pmatrix}-1&0&0\\  0& 1&0 \\ 0&0&-1
\end{pmatrix}$ as time reversal symmetries for $(\ref{E_:ECU})_{2}$. The geometric analysis of this problem is, obviously, richer than that of $(\ref{E_:ECU})_{1}$. In the problem $(\ref{E_:ECU})_{1}$ the involutions $\Pi_1(M)$ and $\Pi_1(-M)$ are conjugated and they represent reflections about different axes. But in $(\ref{E_:ECU})_{2}$, $\Pi_2 (M)$ is a reflection about the plane $x_{1}=0$, which is  a plane of fixed points of $\Pi_2 (M)$, and $\Pi_2 (-M)$ is a rotation of angle $\pi$ around the  $x_{1}$-axe. Of course they are not conjugated. Analysing this case,  one can see first that the equilibrium points are those of the form $(0,0,c)$ and that under the flow generated by $(\ref{E_:ECU})_{2}$, particles are moving within affine hyperplanes $x_{0}=a$. When $a=0$ we obtain the motion described in $(\ref{E_:ECU})_{1}$ in this hyperplane. For $a\neq0$ the orbits of points are parabolas. In this case, the velocity vector field is given by $X(a,b,c)=(0,a,2b)$ and the solution of $(\ref{E_:ECU})_{2}$ with initial conditions $x_{0}(0)=a$, $x_{1}(0)=b$ and $x_{2}(0)=c$ is given by $x(t)=(a, at+b, at^{2}+2bt+c)$. As in the previous case, any orbit, which is not an equilibrium point, is symmetric with respect to the involution $ \Pi_2 (M)$, but any non-trivial orbit is transformed by    $\Pi_2 (-M)$ into another different orbit. In both cases, if after the transformation, we change $t$ by $-t$ we obtain new solutions of the problem $(\ref{E_:ECU})_{2}.$ Finally $-I:\R^{3}\longrightarrow \R^{3}$ is a symmetry for the problem.

\subsection{Facts and/or conjectures and/or speculations on the problems $(\ref{E_:ECU})_{n}$ and $(\ref{E_:ECU})$ }

The flow induced by problem $(\ref{E_:ECU})$ in $\K[[x]]$ is given by \[\Phi:\K[[x]]\times \R \longrightarrow \K[[x]] \] where $\Phi(h,t)=e^{tL^{1,1}_{1}}(h)= \frac{1}{1-xt}h\left(\frac{x}{1-xt}\right)$, while the flow generated by the problem $(\ref{E_:ECU})_{n}$ in $\R^{n+1}$ is \[\Phi_{n}:\R^{n+1}\times \R \longrightarrow \R^{n+1} \] whose matrix expression is $\Phi_{n}(x,t)=\Pi_{n}(e^{tL^{1,1}_{1}})x^{T}.$

We now are going to state, without proofs, properties related to problems $(\ref{E_:ECU})_{n}$ and $(\ref{E_:ECU})$. This is the reason why we entitled this subsection as we did.
\begin{prop}{\bf (Dynamical properties related to problems \textbf{$(\ref{E_:ECU})_{n})$}} 
Let $n$ be a non-negative integer number, then we have the following properties.
\begin{itemize}
\item[(i)] The orbit of any point $a=(a_{0}, a_{1}, \cdots, a_{n})$ in $\R^{n+1}$ is contained in the affine hyperplane $x_{0}=a_{0}.$ Moreover if $a_{0}=0$ and if one consider $\R^{n}=\{(x_{0}, x_{1}, \cdots, x_{n})\in \R^{n+1} / x_{0}=0\}$, the motion induced by $\Phi_{n}$ in $\R^{n}$ is $\Phi_{n-1}$.
    \item[(ii)] The equilibrium points in $(\ref{E_:ECU})_{n}$ are those in the $x_{n}$-axis and if $n>0$ all of them are unstable in the Liapunov sense.
    \item[(iii)] The non-trivial orbits in $(\ref{E_:ECU})_{n}$, i.e., those which are not equilibrium points, are related to the so called {\it moment curve} in the corresponding hyperplane. In particular the solution of $(\ref{E_:ECU})_{n}$  with initial condition $(1, 0, \cdots, 0)\in \R^{n+1}$ is $x(t)=(1, t,t^{2} \cdots, t^{n})$ which is a copy of the corresponding moment curve in the hyperplane $x_{0}=1.$
    \item[(iv)] Any non-trivial orbit in $(\ref{E_:ECU})_{n}$ is symmetric with respect to the time-reversal symmetry $\Pi_{n}(M)$ and no one of them is symmetric respect to $\Pi_{n}(-M).$ Anyway, if we have a non-trivial solution of $(\ref{E_:ECU})_{n}$, i.e., a non-constant one, and we transform it by any of the involutions $\Pi_{n}(M)$ or $\Pi_{n}(-M)$ and then change $t$ by $-t$ we get another solution of $(\ref{E_:ECU})_{n}.$ 
    \item[(v)] $-I:\R^{n+1}\longrightarrow \R^{n+1}$ is a symmetry for the equation $(\ref{E_:ECU})_{n}.$
\end{itemize}
   
\end{prop}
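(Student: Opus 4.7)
My plan is to extract an explicit closed form for the flow $\Phi_n$ from the Riordan matrix exponential $e^{tL^{1,1}_{1}}=T(1\mid 1-xt)$ already computed in the paper, and then read off each of the five assertions directly. By the FTRM, the $(i,j)$-entry of $T(1\mid 1-xt)$ is $\binom{i}{j}t^{i-j}$, so the unique solution of $(\ref{E_:ECU})_n$ with initial condition $a=(a_0,\ldots,a_n)$ is
\[ x_i(t)=\sum_{j=0}^{i}\binom{i}{j}t^{i-j}a_j,\qquad i=0,1,\ldots,n. \]
Every statement in the proposition will reduce to a calculation based on this formula, combined with Proposition \ref{T: eigen}, Theorem \ref{T: TR} and Corollary \ref{T: CTR}.

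For (i), the top row of $\Pi_n(L^{1,1}_{1})$ vanishes, giving $x_0(t)\equiv a_0$, so the orbit lies in the affine hyperplane $x_0=a_0$; when $a_0=0$ the invariant subspace $\{x_0=0\}\cong\R^n$ carries a flow which, after the natural reindexing of components and appealing to the block structure of $T(1\mid 1-xt)$, is identified with $\Phi_{n-1}$. For (ii), the kernel of $\Pi_n(L^{1,1}_{1})$ is read off from its subdiagonal form and coincides with the $x_n$-axis; Lyapunov instability when $n>0$ follows by perturbing any equilibrium $(0,\ldots,0,a_n)$ by $\varepsilon$ in the $x_0$ direction, after which the explicit formula shows $x_1(t)=\varepsilon t$, which escapes every prescribed neighbourhood as $|t|\to\infty$. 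For (iii), plugging $a=(1,0,\ldots,0)$ into the formula immediately yields $x(t)=(1,t,t^{2},\ldots,t^{n})$, the moment curve in $\{x_0=1\}$; a general non-trivial orbit with $a_0\neq 0$ is an affine reparametrisation and rescaling of this moment curve within its own hyperplane $x_0=a_0$.

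For (iv), the time-reversal property of $\Pi_n(M)$ is inherited from Proposition \ref{T: eigen}(ii): since the subdiagonal index of $L^{1,1}_{1}$ is $1$ (odd), one has $\Pi_n(M)\Pi_n(L^{1,1}_{1})\Pi_n(M)=-\Pi_n(L^{1,1}_{1})$, whence $\Pi_n(M)\Phi_n(\cdot,t)=\Phi_n(\cdot,-t)\Pi_n(M)$; the same argument with $-M$ gives the parallel time-reversal statement via Corollary \ref{T: CTR}, which is exactly the ``transform and change $t$ by $-t$'' assertion. Orbit invariance under $\Pi_n(M)$ I would then verify by exhibiting an explicit affine involution $\varphi(t)=-t-2a_1/a_0$ (for $a_0\neq 0$) such that $\Pi_n(M)x(t)=x(\varphi(t))$, checking this polynomial identity componentwise from the closed form, and reducing the case $a_0=0$ to lower dimension via (i). Non-invariance under $\Pi_n(-M)$ is ruled out by the first-coordinate obstruction $-a_0=a_0$ whenever $a_0\neq 0$, with the lower-dimensional case again handled inductively via (i). Finally for (v), $-I$ trivially commutes with every linear operator, hence with every $\Pi_n(e^{tL^{1,1}_{1}})$, so it is a genuine symmetry of $(\ref{E_:ECU})_n$.

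The main obstacle I anticipate is the orbit-invariance portion of (iv). The algebraic time-reversal equation is immediate, but verifying that $\Pi_n(M)$ maps each non-trivial orbit onto itself (rather than onto a parallel translated orbit) requires a careful polynomial identity of degree $n$ in the last coordinate and an induction on the number of vanishing leading coordinates of $a$, using the reduction in (i). The parallel non-invariance under $\Pi_n(-M)$ requires the analogous induction, and the interplay between the two cases is the most delicate part of the argument.
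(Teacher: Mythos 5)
First, be aware that the paper offers no proof of this proposition: it sits in the subsection titled ``Facts and/or conjectures and/or speculations'', and the authors explicitly say the properties are stated ``without proofs''. So your argument has to stand entirely on its own. Your closed form $x_i(t)=\sum_{j=0}^{i}\binom{i}{j}t^{i-j}a_j$ for the flow is correct, and it does settle (ii), (iii), (v) and the ``transform and then change $t$ by $-t$'' part of (iv) exactly as you describe. One smaller issue: in (i), the motion induced on $\{x_0=0\}$ is not $\Phi_{n-1}$ after a mere reindexing, since deleting the first row and column of $\Pi_n(L^{1,1}_{1})$ leaves the subdiagonal $(2,3,\dots,n)$ while the generator of $\Phi_{n-1}$ has subdiagonal $(1,2,\dots,n-1)$; the two flows agree only up to conjugation by $\mathrm{diag}(1,2,\dots,n)$, so you must either supply that conjugacy or weaken the claim.

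The serious failure is the orbit-invariance half of (iv), exactly the step you flagged as delicate. When $a_0\neq 0$, the component $i=1$ of the equation $\Pi_n(M)x(t)=x(s)$ forces $s=-t-2a_1/a_0$, and this choice does verify components $i\le 2$; but component $i=3$ then imposes the genuine constraint $2a_1^3-3a_0a_1a_2+a_0^2a_3=0$ on the initial condition, which generic $a$ violates. Concretely, for $n=3$ and $a=(1,0,0,1)$ the orbit is $(1,t,t^2,t^3+1)$, and $\Pi_3(M)=\mathrm{diag}(1,-1,1,-1)$ carries it onto the orbit of $(1,0,0,-1)$, which is a different orbit; equivalently, a non-equilibrium orbit is $\Pi_n(M)$-symmetric iff it meets $\mathrm{Fix}(\Pi_n(M))$, and $(1,t,t^2,t^3+1)$ never meets $\{(x_0,0,x_2,0)\}$. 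So the first assertion of (iv) is false for $n\ge 3$, and no induction on vanishing leading coordinates can rescue it. (The companion assertion that no non-trivial orbit is $\Pi_n(-M)$-symmetric also fails inside the hyperplane $x_0=0$: for $n=2$ the orbit of $(0,1,0)$ is $(0,1,2t)$, which $\Pi_2(-M)$ maps onto itself.) The statement (iv) holds only for $n\le 2$ --- the two cases the paper actually works out --- so the right move is not to push your computation harder but to repair the claim, e.g.\ by restricting to orbits that meet the relevant fixed-point sets.
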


Motivated by the previous result and knowing that Pascal triangle is the time one map of the flow   $\Phi$, we can state:

\begin{prop}{\bf (Some dynamical properties of Pascal Triangle)}
\begin{itemize}
\item[(i)] The flow $\Phi$ has not equilibrium points up the null power series, that is,  all the coefficients of the power series are null.
\item [(ii)] The orbit of the formal power series constantly $1$, by means of $\Phi$, is the set of geometric progressions $\{\frac{1}{1-tx}\}_{t\in \R}. $ Then one can think about $1$ moving through the set of the germs of analytic functions, at $x=0$, because for any $t\in \R$ the function $f_{t}(x)=\frac{1}{1-tx}$ for  $x\in (-\frac{1}{\mid t \mid}, \frac{1}{\mid t \mid})$  is analytic at $x=0.$ Moreover, this orbit can be seen as the asymptotic behaviour, as $n$ goes to $\infty$, of the moment curves.
\item[(iii)] The Riordan involution $M$ and $-M$ are time-reversal symmetries for the flow $\Phi$. Any orbit of problem $(\ref{E_:ECU})$ is symmetric respect to $M$ and no orbit, except for the unique equilibrium point, is symmetric respect to $-M$. Anyway, if we transform any solution of $(\ref{E_:ECU})$ by means of $M$ or $-M$ and then change $t$ by $-t$ we get another solution of the problem.
\item[(iv)] $-I:\K[[x]]\longrightarrow \K[[x]]$ is a symmetry for the equation $(\ref{E_:ECU}).$
\end{itemize}
\end{prop}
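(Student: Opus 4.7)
The plan is to derive all four items from the explicit time-$t$ map of $L^{1,1}_1$ established in Section~3.2, namely $\Phi(h,t)(x)=\frac{1}{1-xt}\, h\!\left(\frac{x}{1-xt}\right)$, together with Theorem~\ref{T: TR}(ii) and Corollary~\ref{T: CTR} applied with $a=b=1$ and $n=1$.

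For (i), an equilibrium is an element of $\ker L^{1,1}_1$. Using $L(\chi,\al)(h)=\chi(x) h(x)+x\al(x) h'(x)$ from Section~2.3 with $\chi=\al=x$, the equation $L^{1,1}_1(h)=0$ reads $x(h+xh')=0$, i.e., $(xh(x))'=0$. Thus $xh$ is a constant power series of order $\geq 1$, hence equal to $0$, and cancelling $x$ in the integral domain $\K[[x]]$ gives $h=0$. For (ii), substituting $h\equiv 1$ in the flow gives $\Phi(1,t)(x)=\frac{1}{1-xt}=\sum_{n\geq 0} t^n x^n$, manifestly a geometric progression; analyticity of $f_t$ on $|x|<\frac{1}{|t|}$ is classical. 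The moment-curve remark is justified through the preceding proposition on the approaching problems: the orbit of $(1,0,\dots,0)\in\R^{n+1}$ under $(\ref{E_:ECU})_n$ is $(1,t,t^2,\dots,t^n)$, the standard moment curve in the hyperplane $x_0=1$; passing to the inverse limit, these generating functions $\sum_{k=0}^n t^k x^k$ converge coefficientwise in $\K^{\N}$ to $\frac{1}{1-tx}$.

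For (iii), specializing Theorem~\ref{T: TR}(ii) to $n=1$ odd and $a=b=1$ shows that $M$ is a time-reversal symmetry for $\{e^{tL^{1,1}_1}\}_{t\in\R}$, and Corollary~\ref{T: CTR} gives the same for $-M$. Unpacking, $M\circ \Phi(\cdot,t)=\Phi(\cdot,-t)\circ M$ (and likewise for $-M$), which is precisely the statement that transforming a solution by $M$ or $-M$ and then replacing $t$ by $-t$ produces another solution. For the $M$-invariance of the orbit of the constant series $1$, note $M(1)=1$ (since $1$ is even), whence $M(O(1))=O(M(1))=O(1)$; the obstruction for $-M$ comes from evaluating at $x=0$, where $-M(h)(0)=-h(0)$ while $\Phi(h,t)(0)=h(0)$, so the constant term is preserved by the flow but negated by $-M$, making $-M$-invariance impossible for any orbit meeting $\{h:h(0)\neq 0\}$. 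For (iv), since $\Phi(\cdot,t)=T(1\mid 1-xt)$ acts linearly on $\K[[x]]$, one has $\Phi(-h,t)=-\Phi(h,t)$, i.e.\ $-I$ commutes with every time-$t$ map of the flow, which is the definition of a symmetry.

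The step I expect to require most care is the clause ``any orbit is symmetric with respect to $M$'' in (iii): in the strict Lamb--Roberts sense, an orbit is setwise fixed by $M$ only when it meets $\mathrm{Fix}(M)$, which is not automatic in the infinite-dimensional setting. I will therefore interpret the statement at the orbit-level (the reversing-symmetry assertion that $M$ sends orbits of $\Phi$ onto orbits of $\Phi$, so the phase portrait as an unordered partition of $\K[[x]]$ is $M$-invariant) and verify the pointwise statement $M(O(1))=O(1)$ only for the distinguished orbit highlighted in (ii), where it holds cleanly.
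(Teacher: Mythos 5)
First, a point of comparison: the paper offers no proof of this proposition at all. It sits in the subsection explicitly titled ``Facts and/or conjectures and/or speculations,'' and the authors write ``We now are going to state, without proofs, properties related to problems $(\ref{E_:ECU})_{n}$ and $(\ref{E_:ECU})$.'' So your proposal is not competing with an argument in the paper; it is supplying one. What you prove, you prove correctly: the kernel computation $L^{1,1}_1(h)=x\,(xh)'=0\Rightarrow h=0$ for (i); the identity $\Phi(1,t)=\frac{1}{1-tx}$ and the coefficientwise limit of the truncated moment curves for (ii); the specialization of Theorem \ref{T: TR}(ii) and Corollary \ref{T: CTR} to $a=b=1$, $n=1$, giving $Me^{tL^{1,1}_1}M=e^{-tL^{1,1}_1}$ and hence the ``transform by $M$ or $-M$ and reverse time'' assertion in (iii); and linearity of $T(1\mid 1-xt)$ for (iv). Those parts are complete.

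Your caution about the clause ``any orbit is symmetric respect to $M$'' is not merely prudent; that clause is false as literally stated, so your reinterpretation is forced rather than optional. Since $Me^{tL}=e^{-tL}M$, the orbit $O(h)$ is setwise $M$-invariant iff $Mh\in O(h)$, i.e. iff $h(-x)=\frac{1}{1-sx}\,h\!\left(\frac{x}{1-sx}\right)$ for some $s$. Take $h(x)=x$: the orbit is $\left\{\frac{x}{(1-tx)^{2}}\right\}_{t\in\R}$, every element of which has $[x^{1}]$-coefficient equal to $1$, while $Mh=-x$ has $[x^{1}]$-coefficient $-1$; so $O(x)$ is not $M$-symmetric. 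Worse, $-M$ fixes $x$ (as $x$ is odd), so $O(x)$ \emph{is} $-M$-symmetric, contradicting the companion clause that no non-equilibrium orbit is $-M$-symmetric; note that your constant-term obstruction for $-M$ only excludes orbits with $h(0)\neq0$ and is silent exactly where this counterexample lives. The honest conclusion is that items (i), (ii), (iv) and the time-reversal assertions of (iii) are established by your argument, whereas the orbit-symmetry dichotomy in (iii) holds for the distinguished orbit of $1$ (and, more generally, for even versus odd initial data) but not for arbitrary orbits, and the statement would need to be weakened accordingly before it can be proved.
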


To finish this paper note the following.

{\bf Claim:} {\it  Any non-trivial orbit of the problem  $(\ref{E_:ECU})_{n}$ has empty $\alpha$-limit set and $\omega$-limit set.} On the other hand, the problem $(\ref{E_:ECU})_{n}$ has time reversal symmetries. This fact allows us to think that there should be relationships between both limit sets. We then decided to force the corresponding flows, by means of considering a compactification of  the corresponding phase spaces, to get non-empty $\alpha$-limit and  $\omega$-limit sets and to look for relationships between them. We proceed as follows.

Consider the one point (or Alexandroff) compactification of $\R^{n+1}$ which is, topologically, the $n+1$-dimensional sphere $S^{n+1}.$ Let us denote by $\infty$ the added point. Note that any $t$-map of the flow $\Phi_{n}$, $\Phi^{t}_{n}$, can be continuously extended to a map $\widetilde{\Phi^{t}_{n}}:S^{n+1}\longrightarrow S^{n+1}$ defining only $\widetilde{\Phi^{t}_{n}}(\infty)=\infty.$ In this way we get a dynamical system
\[\widetilde{\Phi_{n}}:S^{n+1}\times \R \longrightarrow S^{n+1}. \]
For every non-negative integer $n$ we can also extend the time reversal symmetries  $\Pi_{n}(M)$ and $\Pi_{n}(-M)$ to continuous maps
$\widetilde{\Pi_{n}(M)}$  and $\widetilde{ \Pi_{n}(-M)}$  from $S^{n+1}$ onto itself imposing that the point $\infty$ is a fixed point for both of them. We can identify, topologically,   $\R^{n+1}$   with $S^{n+1}\backslash \{\infty\}$ (which is a dense subset of $S^{n+1}$). With all these constructions we have
\begin{prop} Let $n$ be a non-negative integer number. Then we have the following.
\begin{itemize}
\item[(i)] The maps $\widetilde{\Pi_{n}(M)}$ and $\widetilde{\Pi_{n}(-M)}$  are continuous involutions in  $S^{n+1}$ and they are time-reversal symmetries for the dynamical system $\widetilde{\Phi_{n}}.$
\item[(ii)] The orbits of the dynamical system $\widetilde{\Phi_{n}}$ are those of $\Phi_{n}$ (after the mentioned identification) plus  $\{\infty\}$ which is an equilibrium point. Moreover, every non-trivial orbit of $\widetilde{\Phi_{n}}$ is {\it homoclinic}, being the point $\infty$ an attractor and a repeller of all of them.

\end{itemize}
Consequently, the $\alpha$-limit  and the $\omega$-limit  sets of any non-trivial orbit coincide.
\end{prop}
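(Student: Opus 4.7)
The plan is to verify item (i) by showing that all maps in sight are proper homeomorphisms of $\R^{n+1}$, so they extend continuously to self-homeomorphisms of $S^{n+1}$ fixing $\infty$; then to deduce item (ii) from the explicit polynomial form of the flow obtained via $e^{tL^{1,1}_1}=T(1\mid 1-xt)$. The main obstacle, which is in fact mostly routine, will be transferring the time-reversal relations and joint continuity of the flow to the added point $\infty$.

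For (i), I would begin by recalling that, for every fixed $t\in\R$, $\Phi^{t}_{n}$ is the matrix exponential $e^{t\Pi_{n}(L^{1,1}_{1})}$ acting linearly on $\R^{n+1}$, hence a homeomorphism of $\R^{n+1}$ with continuous inverse $\Phi^{-t}_{n}$. A homeomorphism of a locally compact Hausdorff space extends uniquely to a homeomorphism of its one-point compactification by sending $\infty$ to $\infty$. To obtain joint continuity of $\widetilde{\Phi_{n}}:S^{n+1}\times\R\to S^{n+1}$ at a point $(\infty,t_{0})$, fix a compact $K\subset\R^{n+1}$ and a bounded interval $[t_{0}-\varepsilon,t_{0}+\varepsilon]$; then
\[
\bigcup_{t\in[t_{0}-\varepsilon,t_{0}+\varepsilon]}\Phi^{-t}_{n}(K)
\]
is the continuous image of a compact set, hence compact, and its complement in $S^{n+1}$ is a neighbourhood of $\infty$ whose forward flow lies in $S^{n+1}\setminus K$. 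The same argument applies to $\widetilde{\Pi_{n}(\pm M)}$: both are linear involutions of $\R^{n+1}$, so they extend to continuous involutions of $S^{n+1}$ fixing $\infty$. Finally, the time-reversal identity $\Pi_{n}(\pm M)\circ\Phi^{t}_{n}\circ\Pi_{n}(\pm M)^{-1}=\Phi^{-t}_{n}$ on $\R^{n+1}$, which is the finite-dimensional projection of Theorem \ref{T: TR} and Corollary \ref{T: CTR}, extends to the compactification because all three maps fix $\infty$, settling (i).

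For (ii), the orbit of $\infty$ is clearly $\{\infty\}$ and the other orbits of $\widetilde{\Phi_{n}}$ are those of $\Phi_{n}$, so what must be proved is that every non-trivial orbit of $\Phi_{n}$ tends to $\infty$ both as $t\to+\infty$ and as $t\to-\infty$. Here I would use that $e^{tL^{1,1}_{1}}=T(1\mid 1-xt)$ has $(i,j)$-entry $\binom{i}{j}t^{i-j}$, so the projected flow is
\[
x_{k}(t)=\sum_{j=0}^{k}\binom{k}{j}t^{k-j}a_{j},\qquad 0\leq k\leq n,
\]
for initial data $(a_{0},\ldots,a_{n})$. An initial datum which is not an equilibrium has some $a_{j}\neq 0$ with $j<n$; letting $j^{\ast}$ be the smallest such index, $x_{n}(t)$ is a polynomial in $t$ of degree exactly $n-j^{\ast}\geq 1$, so $|x_{n}(t)|\to\infty$ as $t\to\pm\infty$ and hence $\widetilde{\Phi_{n}}(x,t)\to\infty$ in $S^{n+1}$ in both time directions. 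Consequently the orbit together with $\infty$ is a simple closed curve in $S^{n+1}$, $\infty$ is simultaneously its $\omega$- and $\alpha$-limit set, and the orbit is homoclinic to $\infty$; in particular the two limit sets coincide, as announced.
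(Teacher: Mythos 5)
The paper does not actually prove this proposition: the enclosing subsection is explicitly introduced with ``we are now going to state, without proofs, properties related to problems \ldots'', and the proposition is left as a stated fact. So there is no argument of the authors' to compare yours against; what you have written supplies the missing proof, and it is correct. For (i), extending a self-homeomorphism of the locally compact space $\R^{n+1}$ to the one-point compactification by fixing $\infty$, the compactness argument giving joint continuity of $\widetilde{\Phi_{n}}$ at $(\infty,t_{0})$ via the set $\bigcup_{t\in[t_{0}-\varepsilon,t_{0}+\varepsilon]}\Phi_{n}^{-t}(K)$, and the transfer of the reversal identity to $S^{n+1}$ by continuity are all sound; the identity $\Pi_{n}(\pm M)\,\Phi_{n}^{t}\,\Pi_{n}(\pm M)=\Phi_{n}^{-t}$ on $\R^{n+1}$ is indeed just the image under the group homomorphism $\Pi_{n}$ of Theorem \ref{T: TR}(ii) and Corollary \ref{T: CTR} applied to $L^{1,1}_{1}$ (whose subscript $n=1$ is odd). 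For (ii), the identification $e^{tL^{1,1}_{1}}=T(1\mid 1-xt)$ with entries $\binom{i}{j}t^{i-j}$ gives the stated polynomial flow, the equilibria are exactly the $x_{n}$-axis (the kernel of the subdiagonal matrix $\Pi_{n}(L^{1,1}_{1})$), and your observation that $x_{n}(t)$ has degree exactly $n-j^{\ast}\geq 1$ in $t$ for a non-equilibrium initial condition correctly forces escape to $\infty$ in both time directions, which yields the homoclinic property and the coincidence of the $\alpha$- and $\omega$-limit sets. The only caveat worth recording is terminological: your argument (rightly) establishes that each non-trivial orbit is forward and backward asymptotic to $\infty$, which is what the proposition's phrase ``attractor and repeller of all of them'' must mean, since $\infty$ is not Lyapunov stable for $\widetilde{\Phi_{n}}$ and is not an attractor in the strict neighbourhood-basin sense.
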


\end{section}

\end{document}